\numberwithin{equation}{section}
\newtheorem{Theorem}{Theorem}[section]
\newtheorem*{Theorem*}{Theorem}
\newtheorem{Proposition}[Theorem]{Proposition}
\newtheorem{Conjecture}[Theorem]{Conjecture}
 { \theoremstyle{definition}
\newtheorem{Definition}[Theorem]{Definition}

\newtheorem{Example}[Theorem]{Example}
\newtheorem{Remark}[Theorem]{Remark} }
\begin{document}

\newcommand{\arXivNumber}{2205.08197}

\renewcommand{\PaperNumber}{011}

\FirstPageHeading

\ShortArticleName{Refined and Generalized $\hat{Z}$ Invariants for Plumbed 3-Manifolds}

\ArticleName{Refined and Generalized $\boldsymbol{\hat{Z}}$ Invariants\\ for Plumbed 3-Manifolds}

\Author{Song Jin RI~$^{\rm ab}$}

\AuthorNameForHeading{S.J.~Ri}

\Address{$^{\rm a)}$~ICTP, Strada Costiera 11, Trieste 34151, Italy}
\EmailD{\href{mailto:sri@ictp.it}{sri@ictp.it}}
\Address{$^{\rm b)}$~SISSA, Via Bonomea 265, Trieste 34136, Italy}

\ArticleDates{Received September 05, 2022, in final form February 28, 2023; Published online March 19, 2023}

\Abstract{We introduce a two-variable refinement $\hat{Z}_a(q,t)$ of plumbed 3-manifold invariants $\hat{Z}_a(q)$, which were previously defined for weakly negative definite plumbed 3-manifolds. We also provide a number of explicit examples in which we argue the recovering process to obtain $\hat{Z}_a(q)$ from $\hat{Z}_a(q,t)$ by taking a limit $ t\rightarrow 1 $. For plumbed 3-manifolds with two high-valency vertices, we analytically compute the limit by using the explicit integer solutions of quadratic Diophantine equations in two variables. Based on numerical computations of the recovered $\hat{Z}_a(q)$ for plumbings with two high-valency vertices, we propose a conjecture that the recovered $\hat{Z}_a(q)$, if exists, is an invariant for all tree plumbed 3-manifolds. Finally, we provide a formula of the $\hat{Z}_a(q,t)$ for the connected sum of plumbed 3-manifolds in terms of those for the components.}

\Keywords{$q$-series; $\hat{Z}$ invariants; plumbed 3-manifolds}

\Classification{57K31; 57R56; 11D09}

\section{Introduction}
It is well-known that Dehn surgery establishes a pivotal relation between links in $\mathbb{R}^3$ and closed oriented 3-manifolds \cite{lickorish, wallace}. This gives rise to an important relation between invariants of links and 3-manifolds. A typical example is the colored Jones polynomial for framed oriented links and the Witten--Reshetikhin--Turaev (WRT) invariant for closed oriented 3-manifolds \cite{reshetikhin-turaev, witten}.

Khovanov homology \cite{khovanov} is another well-known invariant for knots and links. Furthermore, it is a categorification of the Jones polynomial, that is, the Euler characteristic of Khovanov homology is the Jones polynomial. Therefore, it is natural to ask if there is an invariant of 3-manifolds not only as the counterpart of Khovanov homology but also as a categorification of the WRT invariants. This is one of the major open questions in quantum topology.

Some progress in this direction has been made in \cite{gppv, gpv}, where a physical definition of certain new invariants of 3-manifolds, often referred to as homological blocks, Gukov--Pei--Putrov--Vafa (GPPV) invariant, or as $\hat{Z}$ invariant, denoted by $ \hat{Z}_a(q) $, was formulated by the study of $3d$ $\mathcal{N}=2 $ supersymmetric theory obtained by compactification of~$6d$ $\mathcal{N}=(2,0)$ theory on a~3-mani\-fold. The $3d$ $\mathcal{N}=2 $ theory depends on the choice of a Lie group $ G $ and the case $G=\text{SU}(2)$ is mostly studied in \cite{gukov, gppv, gpv} since it corresponds to the Jones polynomial. In this paper, we will also assume $G=\text{SU}(2)$. The cases for other gauge groups, especially for $G=\text{SU}(N)$ are studied in~\cite{spark}. The invariant $ \hat{Z}_a(q) $ is a power series in $ q $ with integer coefficients and~$a$ denotes a~$ \text{Spin}^c $ structure of the 3-manifold. Moreover, a general relation between $ \hat{Z} $ invariants and WRT invariants was conjectured. The conjecture says that the limit of a certain linear combination of $ \hat{Z}_a(q) $ over all possible $ a $ as $ q $ goes to a root of unity would be equal to the WRT invariant. A rigorous mathematical definition of the invariants for general 3-manifolds is yet to be found. A concrete mathematical formula of $ \hat{Z}_a(q) $ and its invariance are given in \cite{gukov} and \cite{gppv} only for weakly negative definite plumbed 3-manifolds.

The purpose of this paper is to introduce a refined and generalized version of these $ q $-series, which is defined for a much larger class of plumbed 3-manifolds. To do this, we first construct a~refinement, the $ (q,t) $-series, by introducing a~new regulator variable $ t $, which is also an invariant of \textit{reduced} plumbed 3-manifolds. This $ (q,t) $-series has a nice property that for weakly negative definite plumbings the evaluation at $ t=1 $ is equal to the $ q $-series $ \hat{Z}_a(q) $. However, for weakly positive definite plumbings we can switch the $ (q,t) $-series into the $ q $-series in \cite{gukov} by taking $t=1$ and transforming the expansion in $ q^{-1} $ into an expansion in $ q $ as in \cite{cheng}. Also, the $ (q,t) $-series for negative definite plumbings is reminiscent of the 2-variable series introduced in \cite{akhmechet}, which is obtained by combining the lattice cohomology and the $ q $-series.

Another property of the $ (q,t) $-series is that the exponents of $ t $ are intimately related to non-negative integer solutions of quadratic Diophantine equations. Therefore, we can recover the $ q $-series from the $ (q,t) $-series by computing the limit $ t\rightarrow 1 $ even for a certain class of strongly indefinite plumbings by solving the corresponding quadratic Diophantine equations. In cases when the limit is finite, the recovered $ q $-series is conjecturally also an invariant of plumed 3-manifolds.

The organization of the paper is as follows. In Section \ref{section_plumbed_3-manifolds}, we review some known facts about the plumbed 3-manifolds and the $q$-series $\hat{Z}_a(q)$. In Section~\ref{section_(qt)_series}, we provide the formula for the~$\hat{Z}_a(q,t)$ invariants of reduced plumbed 3-manifolds, and prove that they are independent of the plumbing presentation. In Section~\ref{section_recovering_q_series}, we give an analytical study on the recovering process for plumbings with two high-valency vertices by using the solutions of quadratic Diophantine equations in two variables. Also, we propose a conjecture that the recovered $ q $-series is an invariant of arbitrary plumbed 3-manifolds. Finally, in Section \ref{section_connected_sum}, we provide a relation between the $(q,t)$-series of the disjoint union of two plumbing graphs and those of component plumbings. The Appendix contains a brief review on the solutions of quadratic Diophantine equations in two variables, the results of which are used in the main text.

\section[Plumbed 3-manifolds and q-series]{Plumbed 3-manifolds and $\boldsymbol{q}$-series}
\label{section_plumbed_3-manifolds}

In this section we review some known facts about plumbed 3-manifolds and their invariants, and we set up notational conventions.

\subsection{Plumbing graphs}\label{subsection_plumbing_graphs}

A \textit{plumbing graph} is a finite weighted graph $ \Gamma $, that is, a graph consisting of a finite number of vertices and edges together with the data of integer weights associated to vertices. In Section~\ref{section_plumbed_3-manifolds}--\ref{section_recovering_q_series}, we will assume that $ \Gamma $ is a tree, and we will consider disconnected plumbings in Section~\ref{section_connected_sum}.

Let $ V $ be the set of vertices of $ \Gamma $. For each vertex $ v\in V $, $ m_v $~denotes the weight of the vertex~$ v $, and the degree $ \deg(v) $ describes the number of edges connected to the vertex. Let $ s=|V| $ be the cardinality of the set $ V $. Then we define the symmetric $ s\times s $ matrix $ M=M(\Gamma) $, called \textit{linking matrix} of $ \Gamma $, by
\begin{equation*}
	M_{v_1,v_2}=
	\begin{cases}
		m_v,&\text{if } v_1=v_2=v,\\
		1,&\text{if }v_1, \ v_2 \text{ are connected by an edge},\\
		0,&\text{otherwise}.
	\end{cases}
	\qquad v_i\in V.
\end{equation*}

From $ \Gamma $, we can construct plumbed 3-manifolds in the following way: we first obtain the framed link $ L(\Gamma) $ in $S^3=\partial B^4$ by taking an unknot with framing $ m_v $ for each vertex $ v $ and making these unknots forming Hopf links whenever corresponding vertices are connected by an edge, e.g., see Figure \ref{fig_framed_link}.
\begin{figure}[t]
	\centering
	\includegraphics[scale=0.39]{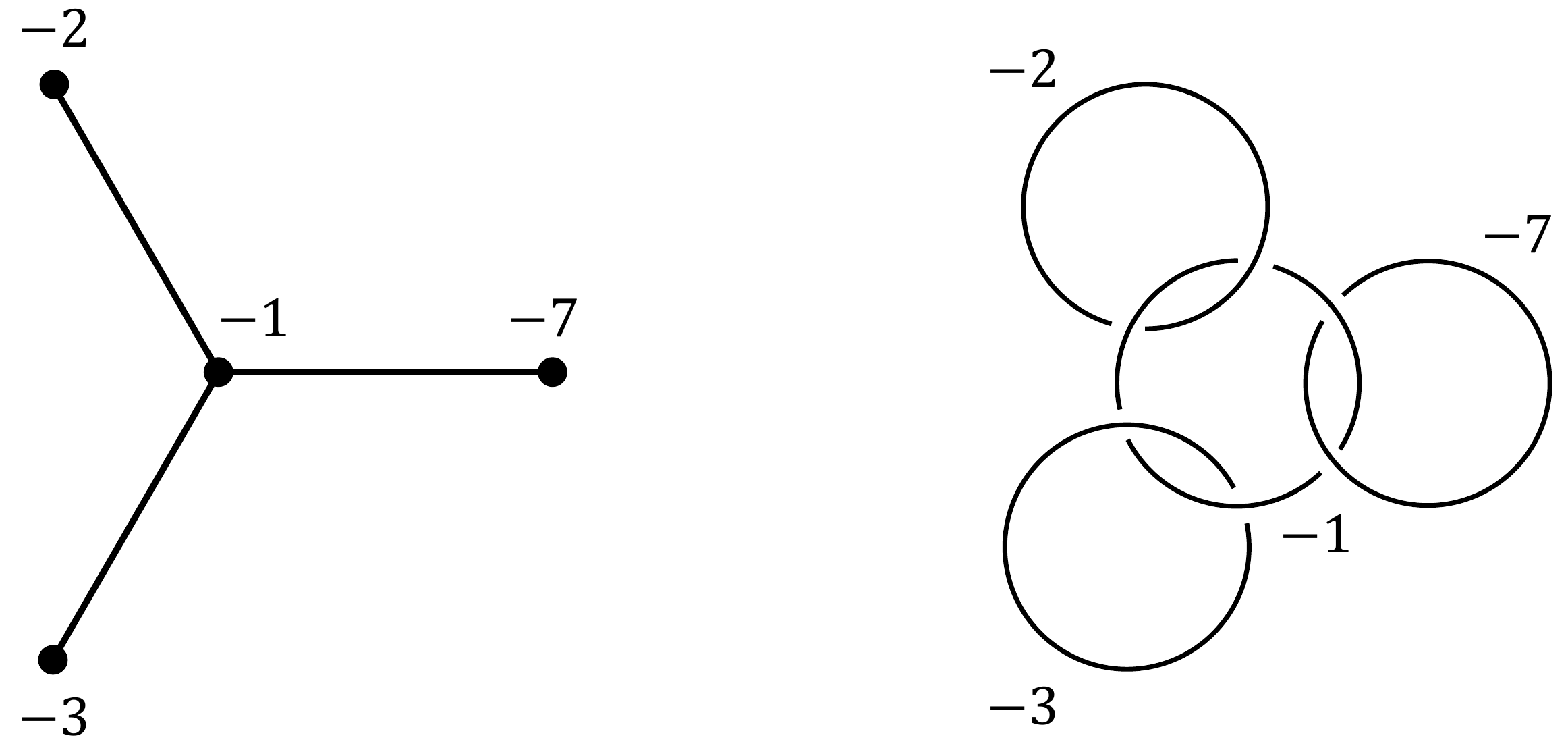}
	\caption{A plumbing graph $ \Gamma $ on the left and its associated framed link $ L(\Gamma) $ on the right.}
	\label{fig_framed_link}
\end{figure}
By attaching two-handles to $ B^4 $ along $ L(\Gamma) $, we get the 4-manifold, denoted by $ W(\Gamma) $. It can be also obtained by plumbing disk bundles over $ S^2 $ with Euler numbers~$ m_v $. Then its boundary $ Y=Y(\Gamma)=\partial W(\Gamma) $ is the closed and oriented 3-manifold obtained by Dehn surgery from $ L(\Gamma) $ whose first homology is
\[
H_1(Y)\cong \mathbb{Z}^s/M\mathbb{Z}^s.
\]

Two different plumbing graphs can represent the same homeomorphism class of $ Y(\Gamma) $. This happens if and only if they are related by a finite sequence of Neumann moves~\cite{neumann} depicted in Figure \ref{fig_neumann_moves}.
\begin{figure}[t]
	\centering
	\includegraphics[scale=0.4]{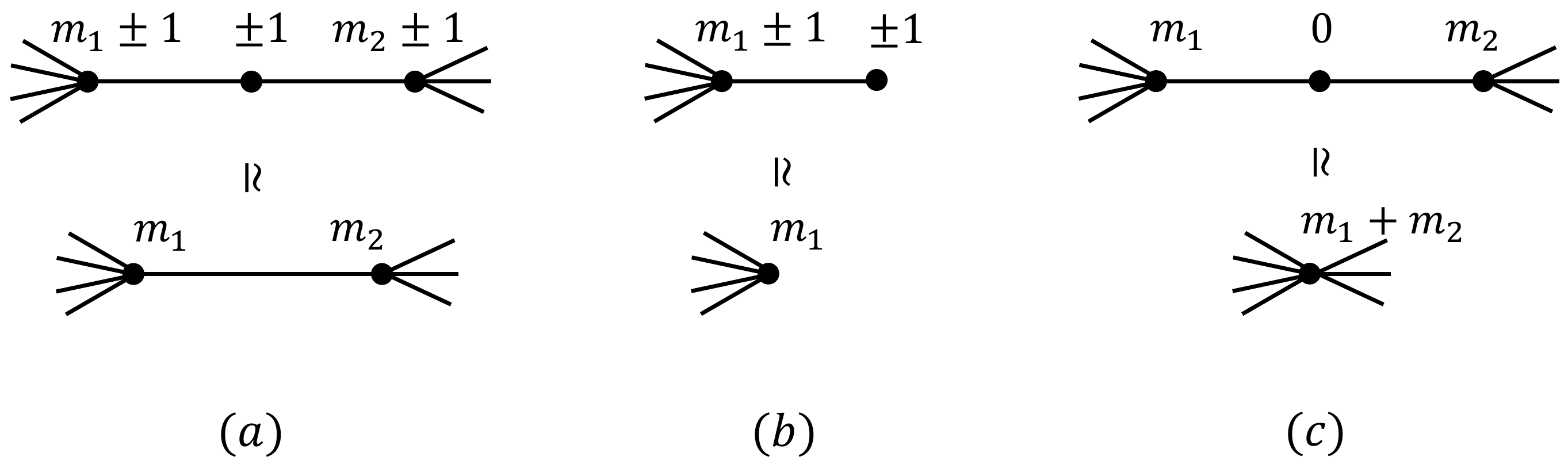}
	\caption{Neumann moves on plumbing graphs that realize homeomorphic 3-manifolds.}
	\label{fig_neumann_moves}
\end{figure}

For the purposes of this paper, we establish some terminology of plumbings that can be helpful for understanding the structures of plumbing graphs. Given a graph $ \Gamma $, we divide the set $ V $ of vertices into three disjoint subsets $ V_1$, $V_2 $ and $ V_h $ with respect to the degree of vertices as following:
\begin{gather*}
	V_1=\{v\in V\mid \deg(v)=1\}, \qquad V_2=\{v\in V\mid \deg(v)=2\}, \\
	V_h=\{v\in V\mid \deg(v)=2+p_v\geq 3, \, p_v\in \mathbb{Z}_{>0} \}.
\end{gather*}
Then we name the elements of $ V_1$, $V_2 $ and $ V_h $ by \textit{valency one}, \textit{valency two} and \textit{high-valency vertices}, respectively.

Assume that $ \Gamma $ has at least one high-valency vertex. Choose a high-valency vertex $ w\in V_h $ among them. Then $ w $ has \textit{arm}s as many as its degree $ \deg(w) $, where an arm means a part of graph, starting from a given high-valency vertex, possibly going through only a finite number of valency two vertices and ending at a valency one or another high-valency vertex. If an arm starts from $ w $ and ends at a valency one vertex $ v\in V_1 $, then we call it a \textit{branch} from $ w $ to $ v $ and denote it by $ \Gamma_{wv} $. An arm starting from a high-valency vertex $ w_1\in V_h $ and ending at another high-valency $ w_2\in V_h $ is called a \textit{bridge} between $ w_1 $ and $ w_2 $, denoted by $ \Gamma_{w_1,w_2} $. For example, two graphs in Figure~\ref{fig_neumann_moves}\,(a) have bridges between $ m_1 $ and $ m_2 $ for the bottom one or from $ m_1\pm 1 $ to $ m_2\pm 1 $ for the top, respectively, and the top graph in Figure \ref{fig_neumann_moves}\,(b) has a branch from $ m_1\pm 1 $ to $ \pm 1 $.

Since a branch $ \Gamma_{wv} $ from $ w $ to $ v $ can be thought as a linear plumbing, the following continued fraction
\begin{equation}\label{eqn_continued_fraction}
	m_w'=m_w-\cfrac{1}{u_1-\cfrac{1}{u_2-\cfrac{1}{\ddots - \cfrac{1}{m_v}}}}
\end{equation}
can give us the information of the branch. It is actually a complete invariant under the Neumann moves creating or annihilating valency two vertices along the branch. In \eqref{eqn_continued_fraction}, $ m_w $ is the weight of the starting high-valency vertex of the branch $ \Gamma_{wv} $, $ u_i $s represent the weights of valency two vertices on the linear plumbing between $ w $ and $ v $, and $ m_v $ is the weight of the ending valency one vertex. If the continued fraction $ m_w' $ of $ \Gamma_{wv} $ is an integer, the branch is called \textit{pseudo} branch because it can be annihilated or removed by a sequence of Neumann moves. For example, the top graph in Figure \ref{fig_neumann_moves}\,(b) has a pseudo-branch $\Gamma_{m_1\pm1, \pm1}$.\footnote{By abuse of notation, we often use the weights to denote the vertices where the meaning is clear in the context.} A high-valency vertex is $w\in V_h$ called a \textit{pseudo} high-valency vertex if it satisfies the following condition
\[
\deg(w) - n_w = 1, 2,
\]
where $n_w$ denotes a number of pseudo branches connected to the vertex $w$. This means that a~pseudo high-valency vertex can be turned into a valency one or two vertex once we annihilate or collapse all of its pseudo branches by using Neumann moves.

For a given bridge $ \Gamma_{w_1,w_2} $ between two high-valency vertices $ w_1,w_2\in V_h $, we also examine the continued fraction of valency two vertices laid on the bridge if they exist. The bridge is called a \textit{pseudo} bridge if it contains at least one valency two vertex and the continued fraction is zero. A pseudo bridge can also be annihilated by a sequence of Neumann moves of type (a) and (c).

We define a plumbing graph to be \textit{reduced} if there is at least one vertex with degree greater than two and there does not exist a pseudo high-valency vertex. One can change any plumbing tree into a reduced plumbing by removing pseudo high-valency vertices by using Neumann moves. Note that removing pseudo high-valency vertex could create another pseudo high-valency vertex, hence it is needed to keep removing all pseudo high-valency vertices as they appear until there is no pseudo high-valency vertex anymore. We also notice that a plumbing without any high-valency vertex represents a lens space.

\subsection[Identification of Spin\^{}c structures]{Identification of $\boldsymbol{\text{Spin}^c}$ structures}

Let us review briefly the identification of $ \text{Spin}^c $ structures on plumbed 3-manifolds in terms of plumbing data. The affine space $ \text{Spin}^c(Y) $ in the case of $ Y=Y(\Gamma) $ for a plumbing tree $ \Gamma $ has already been studied in the Heegard--Floer homology literature \cite{heegard-floer}, and a slightly different description has been used in order to construct an invariant $ \hat{Z}_a(q) $ of the plumbed 3-manifold equipped with the $\text{Spin}^c$ structure $ a $ in~\cite{gukov}. Here we recall $ \text{Spin}^c $ structures on plumbed 3-mani\-folds $ Y(\Gamma) $ by following \cite{gukov}.

The $ \text{Spin}^c $ structures on $ Y $ can be inherited by those on the 4-manifold $ W=W(\Gamma) $ with boundary $ Y $. More explicitly, a natural identification for $ \text{Spin}^c $ structures on $ W $
\begin{equation*}
	\text{Spin}^c(W)\cong 2\mathbb{Z}^s+\vec{m}
\end{equation*}
translates into a natural identification for the plumbed 3-manifold $ Y $
\begin{equation*}
	\text{Spin}^c(Y)\cong \big(2\mathbb{Z}^s+\vec{m}\big)/\big(2M\mathbb{Z}^s\big),
\end{equation*}
where $ \vec{m} $ is the vector made of the weights $ m_v $ for $ v\in V $. Note that both identifications take the conjugation of $ \text{Spin}^c $ structures to the involution $ a\leftrightarrow -a $ on the right-hand side.

Another natural identification
\begin{equation}\label{eqn_spinc_identification_Y}
	\text{Spin}^c(Y)\cong \big(2\mathbb{Z}^s+\vec{\delta}\,\big)/\big(2M\mathbb{Z}^s\big) \cong 2\operatorname{Coker}M+\vec{\delta},
\end{equation}
can be obtained by using the map
\[
\begin{split}
	\phi\colon \ \big(2\mathbb{Z}^s+\vec{m}\big)/\big(2M\mathbb{Z}^s\big) & \xrightarrow{\cong} \big(\big(2\mathbb{Z}^s+\vec{\delta}\,\big)/\big(2M\mathbb{Z}^s\big)\big),\\
	\big[\vec{\ell}\,\,\big] & \rightarrow \big[\vec{\ell}-M\vec{u}\big],
\end{split}
\]
where $ \vec{\delta} $ is the vector of the degrees of the vertices in $ \Gamma $, and $ \vec{u}=(1,1,\dots,1) $.

\subsection[The q-series]{The $\boldsymbol{q}$-series}

The $ q $-series has been firstly proposed in \cite{gppv} aimed to categorify the WRT invariant. It was originally defined for plumbed 3-manifolds coming from negative definite plumbing graphs. For a later convenience, we recall the formula of the $ q $-series $ \hat{Z}_a(q) $ for weakly negative definite plumbed 3-manifolds following \cite{gukov} and \cite{gppv}.

Let $ \Gamma $ be a plumbing graph satisfying the weakly negative definite condition, that is, the linking matrix $ M=M(\Gamma) $ is invertible and $ M^{-1} $ is negative definite on the subspace of $\mathbb{Z}^s$ spanned by the high-valency vertices. Using identifications~\eqref{eqn_spinc_identification_Y}, fix a representative $ \vec{a}\in 2\mathbb{Z}^s+\vec{\delta} $ of a class
\[
a\in \big(2\mathbb{Z}^s+\vec{\delta}\,\big)/\big(2M\mathbb{Z}^s\big).
\]
Then the $ q $-series $ \hat{Z}_a(q) $ is defined by
\begin{equation}\label{eqn_defn_q_series}
	\hat{Z}_a(q)=(-1)^{\pi}  q^{\frac{3\sigma-\sum_v m_v}{4}} \cdot \text{CT}_{\vec{z}}\left\{ \mathcal{D}(\vec{z}) \cdot \Theta_a^{M}(\vec{z})\right\},
\end{equation}
where
\begin{equation}\label{eqn_defn_discriminant_q_series}
	\mathcal{D}(\vec{z})=\prod_{v\in V} \left(z_v-\frac{1}{z_v}\right)^{2-\deg(v)}
\end{equation}
and
\begin{equation}\label{eqn_defn_Theta_function_q_series}
	\Theta_a^{M}(\vec{z})=\sum_{\vec{\ell}\in 2M\mathbb{Z}^s+\vec{a}} q^{-\frac{(\vec{\ell},M^{-1}\vec{\ell})}{4}} \prod_{v\in V} z_v^{\ell_v}.
\end{equation}
In \eqref{eqn_defn_q_series}, $ \text{CT}_{\vec{z}}$ is the operation of taking the constant terms of the formal power series in $z_v$. Also, for the factors in front of the operation $ \text{CT}_{\vec{z}}$, $ \pi=\pi(M) $ denotes the number of positive eigenvalues of the linking matrix $ M $ and $ \sigma=\sigma(M) $ is the signature of $ M $, i.e., $ \sigma=2\pi-s $.

Note that all rational functions in \eqref{eqn_defn_discriminant_q_series} should be understood as the symmetric expansion, that is, the average of the expansion as $ z_v \rightarrow 0 $ and $ z_v\rightarrow\infty $. For example, given a high-valency vertex $ w\in V_h $ with $ \deg(w)=2+p_w $, we have
\begin{align}
\left(z_w-\frac{1}{z_w}\right)^{-p_w}&=\frac{1}{2}\left.\left(z_w-\frac{1}{z_w}\right)^{-p_w}\right|_{|z_w|<1}+\frac{1}{2}\left.\left(z_w-\frac{1}{z_w}\right)^{-p_w}\right|_{|z_w|>1} \nonumber\\
		&=\frac{1}{2}\left[\left(-\sum_{i=0}^{\infty}z_w^{2i+1}\right)^{p_w}+\left(\sum_{i=0}^{\infty}z_w^{-(2i+1)}\right)^{p_w}\right]\nonumber\\
		&=\frac{1}{2}\sum_{r_w=0}^{\infty}A(r_w,p_w)\left[(-1)^{p_w}z_w^{2r_w+p_w}+z_w^{-2r_w-p_w)}\right],\label{eqn_symmetric_expansion}
	\end{align}
where $ A(r,p) $ is a binomial coefficient
\begin{equation*}
	A(r, p) = \binom{r}{p} = \frac{(r+1)(r+2)\cdots(r+p-1)}{(p-1)!}.
\end{equation*}

\begin{Remark}
	The weakly negative definite condition should be imposed to ensure that $ \hat{Z}_a(q) $ is well-defined. In fact, together with this condition, \eqref{eqn_defn_Theta_function_q_series} implies that the exponents of $ q $ in $ \hat{Z}_a(q) $ has a lower bound, and that only finitely many terms can contribute to the same exponent of $ q $.
\end{Remark}

\begin{Remark}
	The condition that $M$ is invertible means the corresponding plumbed 3-manifold is a rational homology sphere. We will mostly be interested in the case where $M$ is invertible. The $q$-series for plumbings satisfying $\det M=0$ was dealt in \cite{CGP}.
\end{Remark}

\section[The (q,t)-series invariants]{The $\boldsymbol{(q,t)}$-series invariants}\label{section_(qt)_series}

In this section we construct the formula of the $ (q,t) $-series $ \hat{Z}_a(q,t) $ for a certain class of plumbed 3-manifolds by introducing a regulator $ t $ into the $ q $-series $ \hat{Z}_a(q) $, previously defined only for weakly negative definite plumbings.

\subsection{Definition}
Let $ \Gamma $ be a reduced plumbing graph, that is, a tree plumbing that has at least one high-valency vertex, but does not have any pseudo high-valency vertex. We assume that its linking matrix $ M=M(\Gamma) $ is invertible. We keep our notations as in Section \ref{section_plumbed_3-manifolds}.

\begin{Definition}
	The $(q,t)$-series $\hat{Z}_a(q,t)$ for a reduced plumbing, equipped with some $\text{Spin}^c$ structure $ \vec{a}\in 2\mathbb{Z}^s+\vec{\delta} $, is defined by
	\begin{equation}\label{eqn_defn_(qt)_series}
		\hat{Z}_a(q,t)=(-1)^{\pi} q^{\frac{3\sigma-\sum_v m_v}{4}}\cdot \text{CT}_{\vec{z}}\left\{\frac{1}{{|\mathcal{S}|}}\sum_{\xi \in \mathcal{S}}\prod_{w\in V_h} \left.\mathcal{D}_w(\vec{z},t)\right|_{\xi_w} \cdot \Theta_a^{M}(\vec{z})\right\}.
	\end{equation}
\end{Definition}

Let us elaborate on the various elements of this formula.

For the factors in front of the $ \text{CT}_{\vec{z}} $ operation, the $ \pi=\pi(M) $ and $ \sigma $ denote the number of positive eigenvalues and the signature of the linking matrix $ M $, respectively. The theta function is the same as the one in \eqref{eqn_defn_Theta_function_q_series} for the $ q $-series, and $ \text{CT}_{\vec{z}} $ denotes the operation of taking the constant term of the Laurent series in $ z_v\in V $.

The $ \mathcal{S} $ is a set of vectors $ \xi\in \{\pm1\}^{V_h} $ that satisfy the following property
\[
\xi_{w_1}\xi_{w_2}=(-1)^{\pi(\Gamma_{w_1,w_2})+1} \quad \text{for each pseudo bridge } \Gamma_{w_1,w_2} \text{ in } \Gamma,
\]
where $\xi_{w}$ denotes the element in a vector $\xi$ corresponding to the high-valency vertex $w\in V_h$, and $\pi(\Gamma_{w_1,w_2})$ is the number of positive eigenvalues of the linking matrix associated to $\Gamma_{w_1,w_2}$. One can easily see that the set $ \mathcal{S} $ for a given $ \Gamma $ is well-defined as long as $ \Gamma $ is a tree. Moreover, the cardinality $ |\mathcal{S}| $ of the set $ \mathcal{S} $ is equal to 2 to the power of the number of high-valency vertices minus the number of pseudo bridges in $ \Gamma $. A vector $ \xi\in \mathcal{S} $ is called a \textit{chamber} for $ \Gamma $.

For a given chamber $ \xi $, the discriminant function at this chamber is defined by
\begin{equation}\label{eqn_defn_Discriminant_(qt)_series}
	\left.\mathcal{D}_w(\vec{z},t)\right|_{\xi_w}:=
	\begin{cases}
		\displaystyle{(-1)^{p_w}\sum_{r_w=0}^{\infty}A(r_w,p_w)t^{2r_w+p_w} z_w^{2r_w+p_w}\prod_{v\in I_w}\mathcal{D}_v^{\varphi(v)}(z_v,t)}&\text{for} \ \xi_w=+1,\\
		\displaystyle{\sum_{r_w=0}^{\infty}A(r_w,p_w)t^{2r_w+p_w}z_w^{-2r_w-p_w}\prod_{v\in I_w}\mathcal{D}_v^{-\varphi(v)}(z_v,t)}&\text{for} \ \xi_w=-1.
	\end{cases}
\end{equation}
Here $ I_w $, associated to each $ w\in V_h $, is the set of valency one vertices that are end-points of all branches starting from $ w $. And the discriminant $ \mathcal{D}_v^{\varphi(v)} $ of the valency one vertex $ v\in I_w $ depends on $ \varphi(v) $ by
\begin{equation*}
	\mathcal{D}_v^{\varphi(v)}:=
	\begin{cases}
		z_v t-{1}/{z_vt}&\text{for} \ \varphi(v)=+1,\\
		{z_v}/{t}-{t}/{z_v}&\text{for} \ \varphi(v)=-1,\\
		z_v-{1}/{z_v}&\text{for} \ \varphi(v)=0.
	\end{cases}
\end{equation*}
The function $ \varphi(v) $ returns a sign, determined by the branch $ \Gamma_{wv} $, as follows: if the branch $ \Gamma_{wv} $ from $ w $ to $ v $ is not a pseudo-branch, then $ \varphi(v)=0 $. If the branch $ \Gamma_{wv} $ is a pseudo-branch, by definition, the weights of the vertices laid on the branch should satisfy
\[
m_w'=m_w-\cfrac{1}{u_1-\cfrac{1}{u_2-\cfrac{1}{\ddots - \cfrac{1}{m_v}}}},
\]
for some finite integer $ m_w' $. Then we define
\[
\varphi(v)=(-1)^{\pi(\Gamma_{wv})-\pi(\Gamma_{m_w'})},
\]
where $ \Gamma_{m_w'} $ is the graph that consists of the single vertex weighted by $ m_w' $. Recall that the notation $ \pi(\Gamma_{wv}) $ denotes the number of positive eigenvalues of the linking matrix corresponding to $ \Gamma_{wv} $, so it is immediate to see $ (-1)^{\pi(\Gamma_{m_w'})}=\text{sgn}(m_w') $.

For completeness, we define $ \hat{Z}_a(q,t)=\hat{Z}_a(q) $ if there is no high-valency vertex in $ \Gamma $. The $ (q,t) $-series is well-defined even when $ \Gamma $ does not satisfy the weakly negative (or positive) definite because there are only finitely many contributions to any monomial $ t^{n_t} q^{n_q} $.
\begin{Remark}\label{remark_main_defn}
	Comparing \eqref{eqn_defn_Discriminant_(qt)_series} with \eqref{eqn_symmetric_expansion}, one can see that the new variable $ t $ is introduced as the regulator that appears in the standard $ \zeta $-regularization process. Moreover, if $ \Gamma $ is weakly negative definite, then we can recover $ \hat{Z}_a(q) $ from $ \hat{Z}_a(q,t) $ by taking $ t=1 $.
\end{Remark}

\begin{Remark}
	In \cite{gukov}, two-variable series $F_K(x,q)$ for knot complements is introduced as the analogue of the invariants $\hat{Z}_a(q)$. It is also possible to construct the $t$-deformation of $F_K$ as the analogue of $\hat{Z}_a(q,t)$ for knot complements in the same way. We note that another version of $t$-deformation and $a$-deformation has been discussed in \cite{tdeform}.
\end{Remark}

\subsection{Invariance}
For the $ (q,t) $-series $ \hat{Z}_a(q,t) $ to be an invariant of the plumbed 3-manifold $ Y $ equipped with the $ \text{Spin}^c $ structure $ a $, it should be independent on the presentation of $ Y $ as a plumbing, that is, it has to be unchanged under the Neumann moves in Figure \ref{fig_neumann_moves}.

\begin{Proposition}\label{proposition_relation_reduced_plumbings}
If two reduced plumbings represent a same $3$-manifold, then there exists a~sequence of Neumann moves such that none of those Neumann moves creates any pseudo high-valency vertices.
\end{Proposition}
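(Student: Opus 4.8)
The plan is to connect $\Gamma$ and $\Gamma'$ through a common normal form while never leaving the class $\mathcal{P}$ of plumbing trees that possess no pseudo high-valency vertex. Both endpoints lie in $\mathcal{P}$ by hypothesis, and a Neumann move between two graphs of $\mathcal{P}$ does not create a pseudo high-valency vertex, so it suffices to exhibit, for an arbitrary reduced $\Gamma$, a sequence of Neumann moves from $\Gamma$ to the minimal plumbing tree $\Gamma_{\min}$ of $Y$ that passes only through graphs of $\mathcal{P}$; doing this for both $\Gamma$ and $\Gamma'$ and concatenating (reversing the second half, which is legitimate since membership in $\mathcal{P}$ is a property of the graphs and not of the moves) gives the claim. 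The mechanism behind ``staying in $\mathcal{P}$'' is that, as one checks move by move, the only Neumann move that can turn a graph of $\mathcal{P}$ into one outside it is a type (b) blow-up performed at a valency-two vertex --- it produces a $\pm 1$-weighted leaf, hence a pseudo branch, at a vertex that thereby has degree three --- and the reduction of $\Gamma$ to $\Gamma_{\min}$ uses no blow-ups at all, only blow-downs and $0$-chain absorptions.

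First I would pass from $\Gamma$ to a graph $\Gamma^{\flat}$ that is in $\mathcal{P}$ and in which, moreover, no arm attached to a high-valency vertex is pseudo: collapse the pseudo branches and pseudo bridges one at a time, using the sequences of type (a) and (c) moves recalled in Section~\ref{subsection_plumbing_graphs}. To see each intermediate graph lies in $\mathcal{P}$, use the invariance recalled around \eqref{eqn_continued_fraction}: the continued fraction of an arm is unchanged by Neumann moves carried out within that arm, so collapsing one arm at a high-valency vertex $w$ changes the \emph{values} of the continued fractions of the other arms at $w$ only by the integer shift in $m_w$, hence leaves their integrality --- that is, their pseudo/non-pseudo status --- intact, while all vertices other than $w$ are untouched. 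Tracking $\deg(w)-n_w$ then keeps $w$ non-pseudo: collapsing a pseudo branch at $w$ fixes $\deg(w)-n_w$ until its last move, which lowers $\deg(w)$ and $n_w$ by one each, and collapsing a pseudo bridge between $w_1,w_2$ merges them into a vertex with $\deg-n = (\deg(w_1)-n_{w_1})+(\deg(w_2)-n_{w_2})-2 \ge 4$. Since a graph in $\mathcal{P}$ has $\deg(w)-n_w \ge 3$ at every high-valency vertex, and since no move of this kind raises a vertex of degree $\le 2$ to high valency, every intermediate graph again lies in $\mathcal{P}$.

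Next I would drive $\Gamma^{\flat}$ to $\Gamma_{\min}$ by the standard reduction: blow down all remaining $(-1)$-curves and absorb all remaining $0$-chains, until stuck. These moves act inside arms and never raise a degree, so no new high-valency vertex appears; and by the same continued-fraction invariance a non-pseudo arm stays non-pseudo, so no pseudo branch or bridge --- and hence no pseudo high-valency vertex --- is ever created. (Here it matters that a $(-1)$-weighted leaf at a high-valency vertex is automatically a pseudo branch, which is exactly what the non-integrality of that arm's continued fraction forbids.) Thus this path, too, stays in $\mathcal{P}$. Furthermore every high-valency vertex of $\Gamma^{\flat}$ survives to $\Gamma_{\min}$, because a non-pseudo arm cannot be collapsed; in particular $\Gamma_{\min}$ still has a high-valency vertex, so $Y$ is neither a lens space nor $S^1\times S^2$, and Neumann's plumbing calculus \cite{neumann} then guarantees that $\Gamma_{\min}$ is the unique minimal plumbing tree of $Y$ and is precisely what the standard reduction produces from \emph{any} plumbing tree of $Y$. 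Running the same construction on $\Gamma'$ and concatenating the two paths completes the argument.

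I expect the crux to be verifying that the reduction stays in $\mathcal{P}$, and within that the type (c) $0$-chain absorption is the most delicate: one has to inspect the local pictures and confirm that absorbing a $0$-vertex never folds weights together so as to make a previously non-pseudo arm pseudo, nor pushes some $\deg(w)-n_w$ down to $1$ or $2$. The two facts that make this go through --- additivity of $\deg-n$ under the merge, and invariance of the arm continued fractions under moves internal to an arm --- must be combined, case by case. A second point to be handled with care is the appeal to uniqueness of the minimal graph: one should confirm that ``$\Gamma_{\min}$ contains a high-valency vertex'' genuinely excludes every $3$-manifold for which Neumann's minimal plumbing fails to be unique.
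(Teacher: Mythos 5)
Your overall strategy---reduce both $\Gamma$ and $\Gamma'$ to a common canonical graph along paths that stay inside the class $\mathcal{P}$ of plumbings with no pseudo high-valency vertex, then concatenate---differs from the paper, which instead takes an \emph{arbitrary} sequence of Neumann moves joining $\Gamma$ to $\Gamma'$ and repairs it locally (first deleting redundant ``create-then-annihilate'' subsequences, then collapsing the offending pseudo bridge throughout the stretch of the sequence where a pseudo high-valency vertex appears). The difficulty is that your route rests on a claim that is not true as stated: that blowing down $\pm1$-weighted leaves and valency-two vertices and absorbing $0$-chains, starting from any plumbing tree of $Y$, terminates at one and the same minimal graph $\Gamma_{\min}$. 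Blow-down-minimal presentations are not unique. For instance, the star with central weight $+1$ and legs $2,3,5$ (Figure~\ref{fig_Sigma_235}) and the all-$(+2)$ $E_8$-shaped tree both represent $\overline{\Sigma(2,3,5)}$, both are reduced in the sense of this paper, and neither admits any further blow-down or $0$-chain absorption; to pass from one to the other one must blow \emph{up} along the way. Neumann's uniqueness theorem applies to his normal form, which is not reached by downward moves alone, and its reduction algorithm uses moves that enlarge the graph---so the advertised benefit of your scheme (``the reduction uses no blow-ups at all, hence cannot leave $\mathcal{P}$'') evaporates, and you would have to re-prove, for the full normal-form calculus, exactly the kind of statement the proposition is about.

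A secondary, related gap: your classification of the dangerous moves is incomplete. Besides a type~(b) blow-up at a valency-two vertex, a pseudo high-valency vertex can also be created by the upward direction of move~(c), which splits a high-valency vertex into two vertices joined by a pseudo bridge (e.g., a degree-$5$ vertex carrying two pseudo branches, allowed in $\mathcal{P}$ since $\deg-n=3$, splits into a degree-$3$ vertex carrying both pseudo branches, which has $\deg-n=1$). This is precisely the second creation mechanism singled out in the paper's proof, and it occurs in any reduction scheme that is forced to use upward moves---as, by the counterexample above, yours ultimately is. Your first stage (collapsing pseudo branches and bridges while tracking $\deg(w)-n_w$ and the integer shifts of the arm continued fractions) is sound and close in spirit to manipulations the paper also performs, but the second stage needs to be replaced by an argument that works with a given, arbitrary chain of Neumann moves rather than with a purported canonical endpoint.
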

\begin{proof}
Since two reduced plumbings $\Gamma$ and $\Gamma'$ realize the same 3-manifold, there exists a~sequence of plumbings $G=\{\Gamma_0=\Gamma, \Gamma_1, \Gamma_2, \dots, \Gamma_{n-1}, \Gamma_n=\Gamma'\}$ such that any adjacent two plumbings $\Gamma_{i-1}$ and $\Gamma_i$ are related by a~Neumann move depicted in Figure \ref{fig_neumann_moves} for $i=1, \dots, n$. It is important to note that in this proof we consider a Neumann move as an action applied to a certain vertex on a branch or on a bridge, instead of regarding it as a transformation from a~plumbing graph to another.
	
As a first step, we pick up a subsequence of Neumann moves such that the first move in the subsequence creates a pseudo branch or a pseudo bridge, the last move deletes the pseudo branch or bridge, and all the moves in between are actions on the branch or bridge. It is clear that the subsequence is redundant, therefore, we are going to remove all such subsequences. Notice that another subsequence with this property can be appeared after removing a subsequence, so we keep removing subsequences until they will not appear anymore. We also note that the remaining sequences of Neumann moves are still well-defined since all the moves in this subsequence are limited on a redundant branch or bridge.
	
If the relevant plumbings are all reduced after removing subsequences, then the proof is done. Therefore, without loss of generality, let us assume that there exists at least one non-reduced plumbing in the sequence $G$. For simplicity, suppose that all non-reduced plumbings in the sequence $G$ contain only one pseudo high-valency vertex, because the cases with more than one pseudo high-valency vertex can be extended in a similar way. Among non-reduced plumbings, choose the non-reduced plumbings $\Gamma_j$ and $\Gamma_k$ with the smallest index $j$ and the largest one $k$. This means that $\Gamma_{j-1}$ and $\Gamma_{k+1}$ are reduced plumbings. Furthermore, since we have removed all redundant subsequences of Neumann moves, there are only two possible ways to create the pseudo-high valency vertex in $\Gamma_j$. The first is that the pseudo high-valency vertex in $\Gamma_j$ is created by adding a pseudo branch to a valency two vertex on a pseudo bridge in $\Gamma_{j-1}$. The other is that a high-valency vertex with degree more than 3 in $\Gamma_{j-1}$ is split into two high-valency vertex connected by a pseudo bridge, one of which is the pseudo high-valency vertex.
	
	Since two cases are related to a pseudo bridge, the key is to collapse the pseudo bridge by using Neumann moves. More explicitly, we insert a plumbing $\Gamma'_{j-1}$ between $\Gamma_{j-1}$ and $\Gamma_j$ where $\Gamma'_{j-1}$ is obtained by collapsing the pseudo bridge on which the pseudo high-valency vertex is laid in $\Gamma_{j}$, only if $\Gamma_{j-1}$ has the corresponding pseudo bridge. Then we get $\Gamma'_i$ from $\Gamma_i$ for $i=j, j+1,\dots, k$ by collapsing the relevant pseudo bridge. We also insert a plumbing $\Gamma'_{k+1}$ between $\Gamma'_k$ and $\Gamma_{k+1}$ by rebuilding a pseudo bridge as same as the one in $\Gamma_{k+1}$ if it exists in~$\Gamma_{k+1}$.
	
Then the sequence $\{\Gamma_0,\dots, \Gamma_{j-1}, \Gamma'_{j-1}, \Gamma'_{j}, \dots,
\Gamma'_k, \Gamma'_{k+1}, \Gamma_{k+1}, \dots, \Gamma_n\}$ satisfies the statement of the proposition. Notice that $\Gamma'_{j-1}$ and $\Gamma'_{k+1}$ might not appear in the sequence case by case.
\end{proof}

\begin{Theorem}\label{theorem_invariance}
The series $ \hat{Z}_a(q,t) $ defined in \eqref{eqn_defn_(qt)_series} is an invariant for the reduced plumbed $3$-manifolds.
\end{Theorem}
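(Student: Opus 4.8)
The plan is to verify that $\hat{Z}_a(q,t)$ is unchanged under every Neumann move connecting two \emph{reduced} plumbings. By Proposition~\ref{proposition_relation_reduced_plumbings}, it suffices to check invariance under a single Neumann move of type (a), (b), or (c) in Figure~\ref{fig_neumann_moves} with the extra guarantee that the move never produces a pseudo high-valency vertex; in particular the move always takes place in the interior of a branch or a bridge, never at a high-valency vertex itself. So I would organize the proof by the location of the move: (i) the move occurs strictly inside a branch $\Gamma_{wv}$ (all valency-two vertices, ending at a valency-one vertex), (ii) the move occurs strictly inside a bridge $\Gamma_{w_1,w_2}$, and (iii) the move changes a leaf at the end of a branch (the $\pm1$-vertex of Figure~\ref{fig_neumann_moves}(b)). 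In each case the combinatorial data $V_h$, the pseudo-branch/pseudo-bridge status, and the chamber set $\mathcal{S}$ are either unchanged or change in a controlled way, and what must be shown is that the summand $\frac{1}{|\mathcal{S}|}\sum_{\xi\in\mathcal{S}}\prod_{w\in V_h}\mathcal{D}_w(\vec z,t)|_{\xi_w}\cdot\Theta_a^M(\vec z)$ produces the same constant term, up to the prefactor $(-1)^\pi q^{(3\sigma-\sum m_v)/4}$, whose transformation law under Neumann moves is the same classical computation (change of $\sigma$, $\pi$, $\sum m_v$) that already appears for $\hat{Z}_a(q)$.

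The heart of the argument is case (i)–(ii): a Neumann move inside a branch or bridge changes the linking matrix $M$ to some $M'$ of size $s\pm1$, and changes $\Theta_a^M$ accordingly. The standard fact (used already for $\hat{Z}_a(q)$ in \cite{gukov,gppv}) is that after applying $\mathrm{CT}$ in the variable $z_v$ attached to the created/annihilated valency-two vertex, $\Theta_a^{M'}$ collapses back to $\Theta_a^M$ because that variable contributes a single summation whose constant term selects the right lattice coset; equivalently the continued fraction \eqref{eqn_continued_fraction} of the branch (or the continued fraction of the bridge) is a complete invariant under exactly these moves, so $M^{-1}$ restricted to the high-valency directions and the relevant off-diagonal couplings are preserved. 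I would phrase this as a lemma: taking the constant term in the variables of all valency-two vertices along a branch $\Gamma_{wv}$ turns $\Theta_a^M$ into an expression depending only on $m_w'$ and the leaf variable $z_v$, and similarly for bridges. Granting that lemma, in case (i) one checks that $\mathcal{D}_w(\vec z,t)|_{\xi_w}$ is literally a function of $m_w'$ through $\varphi(v)$ (and of course of $p_w$, $z_w$, $r_w$), hence unchanged; the only subtlety is that a non-pseudo branch can become a pseudo branch (or vice versa) if the move makes $m_w'$ integral, but this is exactly the case excluded because it would make $w$ a pseudo high-valency vertex unless $w$ retains degree $\geq3$ after the potential collapse — and even then one must check $\varphi(v)=0$ matches the $\varphi(v)=(-1)^{\pi(\Gamma_{wv})-\pi(\Gamma_{m_w'})}$ prescription in the limiting case, which I expect reduces to $\mathrm{sgn}(m_w')$ bookkeeping. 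In case (ii), a move inside a bridge can turn an ordinary bridge into a pseudo bridge or vice versa, which changes $\mathcal{S}$; here one shows that the constraint $\xi_{w_1}\xi_{w_2}=(-1)^{\pi(\Gamma_{w_1,w_2})+1}$ is precisely compensated by the sign $(-1)^{\pi}$ coming from $\Theta_a^M$ after taking constant terms along the bridge, so that summing over $\mathcal{S}$ and dividing by $|\mathcal{S}|$ is consistent with the change in $|\mathcal{S}|$ by a factor of $2$.

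Case (iii), the leaf move of Figure~\ref{fig_neumann_moves}(b) applied at a valency-one vertex, is where the regulator $t$ does real work: the factor $\mathcal{D}_v^{\varphi(v)}$ carries explicit powers of $t$ ($z_vt-1/(z_vt)$ or $z_v/t-t/z_v$), and one must check that contracting the new valency-two vertex shifts $m_w'\mapsto m_w'\pm1$ consistently with the sign change in $\pi(\Gamma_{wv})$ and hence in $\varphi(v)$, so that the monomial in $t$ is reproduced. This is a finite, explicit computation with a single leaf's worth of data.

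\textbf{Main obstacle.} I expect the hard part to be the pseudo/non-pseudo transitions: a Neumann move can flip whether a branch is a pseudo branch or whether a bridge is a pseudo bridge, and these flips simultaneously change the form of $\mathcal{D}_w(\vec z,t)|_{\xi_w}$ (via $\varphi$) \emph{and} the chamber set $\mathcal{S}$ (via the pseudo-bridge count), all while the prefactor $(-1)^\pi$ and the sign $(-1)^{\pi(\Gamma_{wv})-\pi(\Gamma_{m_w'})}$ are shifting. Matching all these signs in the boundary cases — and confirming that Proposition~\ref{proposition_relation_reduced_plumbings} really does let us avoid the genuinely bad case where a pseudo high-valency vertex is created — is the delicate bookkeeping that the proof must carry out carefully, whereas the ``interior'' invariance is essentially the same constant-term computation that establishes invariance of $\hat{Z}_a(q)$ in the weakly negative definite case.
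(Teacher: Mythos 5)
Your overall strategy (reduce via Proposition~\ref{proposition_relation_reduced_plumbings} to Neumann moves that never create a pseudo high-valency vertex, then check each move by tracking the prefactor, the lattice vectors in $\Theta_a^M$, and the discriminant) is the same as the paper's. However, your case decomposition has a genuine gap: you assert that ``the move always takes place in the interior of a branch or a bridge, never at a high-valency vertex itself,'' and this is false. Proposition~\ref{proposition_relation_reduced_plumbings} only excludes the creation of \emph{pseudo} high-valency vertices; it does not prevent a Neumann move from acting at a genuine high-valency vertex. Two such cases are unavoidable and carry the real content of the proof. First, move (b) applied at a high-valency vertex $w$ raises $\deg(w)$ from $2+p$ to $3+p$, so the infinite series $\sum_r A(r,p)t^{2r+p}z_w^{\pm(2r+p)}$ in $\mathcal{D}_w(\vec z,t)|_{\xi_w}$ changes its binomial structure; invariance there rests on the explicit identity
\[
-\tfrac{1}{2}A(r-1,p+1)t^{2r+p-1}\cdot t+\tfrac{1}{2}A(r,p+1)t^{2r+p+1}\cdot\tfrac{1}{t}=\tfrac{1}{2}A(r,p)t^{2r+p},
\]
where the two terms come from the blow-up vectors $\vec{\ell}'_{\pm}=(\vec{\ell}_2,\ell_1\pm1,\mp1)$ and the $t^{\pm1}$ factors from the new leaf's $\mathcal{D}_{v_0}^{\varphi}$. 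Second, move (c) can split a high-valency vertex of degree $\ge 4$ into two high-valency vertices joined by a pseudo bridge (this is exactly the picture in Figure~\ref{fig_neumann_moves}(c)); this changes $V_h$, changes $|\mathcal{S}|$, forces the chamber constraint $\xi_{v_1}\xi_{v_2}=-1$ on the new pseudo bridge, and requires the Vandermonde convolution $A(r,p)=\sum_{r_1+r_2=r}A(r_1,p_1)A(r_2,p_2)$ to reassemble the original $t$-monomial. Neither computation appears anywhere in your three cases (i)--(iii), so the proposal as written does not establish the theorem.

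A secondary point: your ``main obstacle'' is somewhat misplaced. A type (a) or (b) move strictly inside a branch cannot flip its pseudo/non-pseudo status, because the continued fraction $m_w'$ of \eqref{eqn_continued_fraction} is a complete invariant under exactly those moves; the delicate sign bookkeeping in the paper instead concerns (a) moves with sign $+1$ \emph{on} a pseudo branch or pseudo bridge (where $\pi(\Gamma_{wv})$ or $\pi(\Gamma_{w_1,w_2})$ jumps and hence $\varphi(v)$ or the admissible chambers flip), and the move-(c) splitting case above. Your instinct that the interior cases reduce to the classical $\hat{Z}_a(q)$ computation is correct, but the theorem is proved precisely in the cases you excluded.
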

\begin{proof}
Since $ \hat{Z}_a(q,t)$ is equal to $\hat{Z}_a(q) $ for the plumbings without high-valency vertices by definition, we can assume that $ V_h(\Gamma ) $ is not an empty set. We use a prime to distinguish the quantities associated to the top graphs in Figure~\ref{fig_neumann_moves}. For example, $ M $ is the linking matrix for the bottom graph $ \Gamma $, and $ M' $ denotes the one for the top graph~$ \Gamma' $. According to the result of Proposition~\ref{proposition_relation_reduced_plumbings}, it is enough to consider Neumann moves that does not create a pseudo high-valency vertex.
	
Consider the move (a) in Figure $ \ref{fig_neumann_moves} $, with the signs on top being~$ -1 $. By linear algebra, it is immediate to see that we have $ \sigma'=\sigma-1 $, $ \pi'=\pi $, hence the quantity~$ 3\sigma-\sum_v m_v $ does not change. Furthermore, the factors in front of the $ \text{CT}_{\vec{z}} $ operation in~\eqref{eqn_defn_(qt)_series} does not change either.
	
For the bottom graph $ \Gamma $, let us write a vector $ \vec{\ell}\in \mathbb{Z}^s $ as a concatenation
	\[
	\vec{\ell}=\big(\vec{\ell}_1,\vec{\ell}_2\big),
	\]
	such that $ \vec{\ell}_1 $ describes the left part of the graph including the vertex $ m_1 $ and $ \vec{\ell}_2 $ describes the right part. Then we can construct a vector $ \vec{\ell}' $ for the top graph $ \Gamma' $ by
	\[
	\vec{\ell}'=\big(\vec{\ell}_1,0,\vec{\ell}_2\big)\in \mathbb{Z}^{s+1},
	\]
	that satisfies
	\begin{equation}\label{eqn_ells_relation_move_A}
		\big(\vec{\ell},M^{-1}\vec{\ell}\,\big)=\big(\vec{\ell}',(M')^{-1}\vec{\ell}'\big).
	\end{equation}
	Moreover, it is shown in \cite{gukov} that $ \vec{\ell}' $ has the property $ \vec{\ell}'\in 2M'\mathbb{Z}^{s+1}+\vec{a}' $ whenever $ \vec{\ell}\in 2M\mathbb{Z}^s+\vec{a} $, where $ \vec{a}' $ is a representative of the $ \text{Spin}^c $ structure $ a' $ for $ Y(\Gamma') $ that is the counterpart of $ a $ through the isomorphism $ Y(\Gamma)\cong Y(\Gamma') $.
	
	Taking these into account, when the move happens inside a branch $ \Gamma_{wv} $ from a high-valency vertex $ w\in V_h $ to a valency one vertex $ v\in I_w $, observe that there is no change for $ \varphi(v) $ whether the branch is pseudo or not. This implies that the discriminant function does not change, hence we obtain the same result.
	
	For the case when the move happens inside a bridge between two high-valency vertices, two discriminants for $ \Gamma' $ and $ \Gamma $ have the same structure, therefore they yield the same $ \hat{Z}_a(q,t) $.

	Let us consider now the case of the move (a) with the sign $ +1 $ for the top graph in a similar way. As before, the quantity $ 3\sigma-\sum_v m_v $ is still unchanged, but we have $ \pi'=\pi+1 $, so $ (-1)^{\pi} $ switches the sign. Given $ \vec{\ell}=\big(\vec{\ell}_1,\vec{\ell}_2\big)$ for the bottom graph, a vector for the top graph
	\[
	\vec{\ell}'=\big(\vec{\ell}_1,0,-\vec{\ell}_2\big)
	\]
	satisfies \eqref{eqn_ells_relation_move_A}.
	
	For the move on a bridge that is not pseudo, the discriminant does not change, but due to the change of sign in $ \vec{\ell}_2 $, the theta function changes as if we do the substitutions $ z_v\rightarrow z_v^{-1} $ for all the vertices $ v $ corresponding to $ \vec{\ell}_2 $. From this, we obtain extra $ -1 $ sign, that cancels out the disagreement of signs by $ (-1)^{\pi} $. Note that the powers of $ t $ remains the same by the substitutions $ z_v\rightarrow z_v^{-1} $. If the move happens on a pseudo-bridge, the proof is similar, but the only difference is that due to the minus sign in $ -\vec{\ell}_2 $ for $ \vec{\ell}' $ the choice of the chamber for the top graph should be opposite than that for the bottom, because the move increases by 1 the number of the positive eigenvalues of the linking matrix associated to the bridge.
	
	If the move appears on a non-pseudo branch $ \Gamma_{wv} $, then the mechanism is same as one for the move on a bridge, so we get the same result. However, if the move appears on a pseudo-branch~$\Gamma_{wv} $, then we need to care about the change of $ \varphi(v) $ for the discriminant function because the move (a) with the sign $ +1 $ increases the number $ \pi(\Gamma_{wv}) $ of positive eigenvalues of the linking matrix associated to the branch by~$ 1 $. In this case, $ v $ is the only vertex corresponding to~$ \vec{\ell}_2 $, hence the change of the sign for $ \ell_v $ produces the extra sign, but it gives the same powers of~$ t $ because of the change of $ \varphi(v) $. Also, the extra sign remedies the change of~$ (-1)^{\pi} $.
	
	Now, we consider the move (b) when the sign for the blow-up vertex is $ -1 $. We have $ \pi'=\pi $, $ \sigma'=\sigma-1 $, and the quantity $ 3\sigma-\sum_v m_v $ for the top graph is 1 lower than that for the bottom one. This yields an extra factor $ q^{-1/4} $ for the top graph.
	
	For the bottom graph $ \Gamma $, we write vectors as
	\[
	\vec{\ell}=\big(\vec{\ell}_2,\ell_1\big),
	\]
	where $ \ell_1 $ denotes the vertex with weight $ m_1 $. Then we define corresponding vectors for the top graph $ \Gamma' $
	\[
	\vec{\ell}_{\pm}'=\big(\vec{\ell}_2,\ell_1\pm 1,\mp 1\big).
	\]
	By simple linear algebra, one can observe that
	\begin{equation}\label{eqn_ells_relation_move_B}
		\big(\vec{\ell},M^{-1}\vec{\ell}\,\big)=\big(\vec{\ell}'_{\pm}, (M')^{-1}\vec{\ell'}_{\pm}\big)+1.
	\end{equation}
	The extra factor $ +1 $ in \eqref{eqn_ells_relation_move_B} gives rise to $ q^{1/4} $ for the top graph that cancels with $ q^{-1/4} $ coming from the factor in front of the operation $ \text{CT}_{\vec{z}} $.
	
	Let us consider the case where the vertex $ w $ decorated with $ m_1 $ is a high-valency vertex in the bottom graph. We assume that the high-valency vertex does not have a pseudo bridge. One can follow the similar argument for the case when it has a pseudo bridge.
	
	The corresponding part of the discriminant function for $ \Gamma $ is given by
	\[
	\frac{1}{2}\sum_{r=0}^{\infty} A(r,p) t^{2r+p} \big[(-1)^{p}z_1^{2r+p}\mathcal{D}^{+}+z_1^{-2r-p}\mathcal{D}^{-}\big],
	\]
	where $ \mathcal{D}^{\pm} $ are the contributions from vertices in $ I_w $ corresponding to $ \vec{\ell}_2 $, and $ p $ is determined by $ \deg(w)=2+p $. From the role of the operation $ \text{CT}_{\vec{z}} $ in \eqref{eqn_defn_(qt)_series}, it follows that $ \ell_1 $ has values in the form of $ \ell_1=\pm(2r+p) $ for non-negative integers $ r $. Without loss of generality, suppose that $ \ell_1=2r+p $. Then the vectors $ \vec{\ell} = \big(\vec{\ell}_2,2r+p\big) $ pick up monomials in $ t $ given by
	\[
	\frac{1}{2}A(r,p) t^{2r+p}.
	\]
	
	For the top graph, the discriminant function has the following portion
	\[
	\frac{1}{2}\sum_{s=0}A(s,p+1) t^{2s+p+1} \left[(-1)^{p+1}z_1^{2s+p+1}\left(z_0 t-\frac{1}{z_0 t}\right)\mathcal{D}^{+}+z_1^{-2s-p-1}\left(\frac{z_0}{t}-\frac{t}{z_0}\right)\mathcal{D}^{-}\right],
	\]
	where $ z_0 $ is the variable for the newly introduced blow-up vertex in $ \Gamma' $. Corresponding to $ \vec{\ell}=\big(\vec{\ell}_2,2r+p\big) $, we have vectors of the form
	\[
	\vec{\ell}'_{\pm}=\big(\vec{\ell}_2,2r+p\pm 1, \mp1\big).
	\]
	Therefore, the monomials chosen by those vectors are
	\[
	-\frac{1}{2}A(r-1,p+1)t^{2r+p-1}\cdot t+\frac{1}{2}A(r,p+1)t^{2r+p+1}\cdot\frac{1}{t}=\frac{1}{2}A(r,p)t^{2r+p}.
	\]
	This means that we get the same answer for $ \hat{Z}_a(q,t) $.
	
	For the case when the move applies to a valency one vertex $ v_1\in I_w $ for some $ w\in V_h $, it is essential to check that the discriminant for $ \Gamma' $ can be obtained from the one for $ \Gamma $ by the substitution $ z_1\rightarrow z_0 $ since we have $ \varphi(v_1)=\varphi(v_0) $. Then, vectors for $ \Gamma $ are described by
	\[
	\vec{\ell}=\big(\vec{\ell}_2,\pm1\big)
	\]
	and the corresponding vectors for $ \Gamma' $ are given by
	\[
	\vec{\ell}'=\big(\vec{\ell}_2,0,\pm1\big).
	\]
	Putting all together, we get the same result.
	
	Move (b) with the sign $ +1 $ is similar, but we use the vectors of the form
	\[
	\vec{\ell}'=\big(\vec{\ell}_2,\ell_1\pm1,\pm1\big).
	\]
	
	Let us move on to the move (c). Here we have $ \pi'=\pi+1 $ that yields an extra sign, and the factor $ 3\sigma-\sum_v m_v $ is unchanged. For the top graph $ \Gamma' $, we write vectors as
	\[
	\vec{\ell}'=\big(\vec{\ell}_1,\ell_1,0,\ell_2,\vec{\ell}_2\big),
	\]
	where $ \vec{\ell}_1 $ is corresponding to the left side of the graph (not including $ v_1 $) and $ \vec{\ell_2} $ is corresponding to the right side of the graph (not including $ v_2 $). From $ \vec{\ell}' $ we define a vector for the bottom graph $ \Gamma $ as
	\[
	\vec{\ell}=\big(\vec{\ell}_1,\ell_1-\ell_2,-\vec{\ell}_2\big),
	\]
	that satisfies the following equality
	\[
	\big(\vec{\ell},M^{-1}\vec{\ell}\,\big)=\big(\vec{\ell}',(M')^{-1}\vec{\ell}' \big).
	\]
	
	Let $ v_1$, $v_0$, $v_2 $ be the vertices shown in the top graph and $ v_b $ the vertex shown in the bottom one. There are four different cases depending on where those vertices are placed in the graph.
	
	First, we consider the case when $ v_1$, $v_0$, $v_2 $ are laid on a non-pseudo bridge, but at least one of $ v_1 $ and $ v_2 $ is not a high-valency vertex. Then the discriminant functions for $ \Gamma $ and $ \Gamma' $ have the same structure, and the minus sign in front of $ \vec{\ell}_2 $ in $\vec{\ell}'=\big(\vec{\ell}_1,\ell_1,0,\ell_2,-\vec{\ell}_2\big)$ produces the opposite sign for monomials in~$ t $ to the sign from $ \vec{\ell}=\big(\vec{\ell}_1,\ell_1-\ell_2,\vec{\ell}_2\big) $. This sign difference is recovered by the extra sign from $ (-1)^{\pi} $. Thus we get the same result for $ \hat{Z}_a(q,t)$.
	
If the vertices $ v_1$, $v_0$, $v_2 $ are laid on a pseudo bridge, we should put into our consideration the change of choice of chambers for the top graph since the move creates a positive eigenvalue of the linking matrix.
	
The second case is when $ v_1$, $v_0$, $v_2 $ form a bridge between $ v_1 $ and $ v_2 $, that is, $ v_1 $ and $ v_2 $ are both high-valency vertices, and $ v_0 $ is a valency two vertex. This case is just the one depicted in Figure~\ref{fig_neumann_moves}\,(c). For the bottom graph $ \Gamma $, we can assume that the vertex $ v_b $ does not have a pseudo bridge, then the corresponding portion of the discriminant function is given by
	\[
	\frac{1}{2}\sum_{r=0}^{\infty}A(r,p)t^{2r+p}\big[(-1)^p z_b^{2r+p}\mathcal{D}_1^{+}\mathcal{D}_2^{+}+z_b^{-2r-p}\mathcal{D}_1^{-}\mathcal{D}_2^{-}\big],
	\]
	where $ \mathcal{D}_1 $ and $ \mathcal{D}_2 $ denote the discriminant functions of valency one vertices in $ I_{v_b} $ corresponding to $ \vec{\ell}_1 $ and $ \vec{\ell}_2 $, respectively. Then the vectors of the form
	\[
	\vec{\ell}=\big(\vec{\ell}_1, 2r+p, \vec{\ell}_2\big)
	\]
	choose the following monomials
	\[
	\frac{1}{2}A(r,p)t^{2r+p}.
	\]
	For the top graph, since the elements $ \xi_{v_1} $ and $ \xi_{v_2} $ of a chamber $ \xi $ should be opposite, we have
	\begin{gather*}
\frac{1}{2}\left[\sum_{r_1=0}^{\infty}\right.(-1)^{p_1}A(r_1,p_1)t^{2r_1+p_1}z_1^{2r_1+p_1}\mathcal{D}_1^{+}\times\sum_{r_2=0}^{\infty}A(r_2,p_2)t^{2r_2+p_2}z_2^{-2r_2-p_2}\mathcal{D}_2^{-}\\
\qquad {}+\left.\sum_{r_1=0}^{\infty}A(r_1,p_1)t^{2r_1+p_1}z_1^{-2r_1-p_1}\mathcal{D}_1^{-}\times\sum_{r_2=0}^{\infty}(-1)^{p_2}A(r_2,p_2)t^{2r_2+p_2}(-1)^{p_2}z_2^{2r_2+p_2}\mathcal{D}_2^{+}\right],
\end{gather*}
	where $ p_1=\deg(v_1)-2 $ and $ p_2=\deg(v_2)-2 $ such that $ p=p_1+p_2 $. Associated to $ \vec{\ell}=\big(\vec{\ell}_1,2r+p,\vec{\ell}_2\big) $, we have the following vectors
	\[
	\vec{\ell}'=\big(\vec{\ell}_1,2r_1+p_1,0,-2r_2-p_2,-\vec{\ell}_2\big).
	\]
	Here the sum of $ r_1 $ and $ r_2 $ should be equal to $ r $. From such vectors, we obtain
	\[
	-\frac{1}{2}\sum_{r_1+r_2=r}A(r_1,p_1)A(r_2,p_2)t^{2r_1+2r_2+p_1+p_2}=-\frac{1}{2}A(r,p)t^{2r+p},
	\]
	where we have used $ A(r,p)=\sum_{r_1+r_2=r}A(r_1,p_1)A(r_2,p_2) $.
	
	Thirdly, let us consider when $ v_1$, $v_0$, $v_2 $ are vertices on a branch from $ v_1 $ to $ v_2 $ through $ v_0 $. Then, the discriminant function for the bottom graph $ \Gamma $ has the following part
	\[
	\frac{1}{2}\sum_{r=0}^{\infty} A(r,p)t^{2r+p} \big[(-1)^p z_b^{2r+p}\mathcal{D}^{+}+z_b^{-2r-p}\mathcal{D}^{-}\big].
	\]
	The vectors $ \vec{\ell}=\big(\vec{\ell}_1,\ell_b\big)=\big(\vec{\ell}_1,2r+p\big) $ pick up the factors of the form
	\[
	\frac{1}{2}A(r,p)t^{2r+p}.
	\]
	On the other hand, for the top graph $ \Gamma' $ we have
	\[
	\frac{1}{2}\sum_{s=0}^{\infty}A(s,p+1)t^{2s+p+1}\left[(-1)^{p+1}z_1^{2s+p+1}\left(\frac{z_2}{t}-\frac{t}{z_2}\right)\mathcal{D}^{+}+z_1^{-2s-p-1}\left(z_2t-\frac{1}{z_2t}\right)\mathcal{D}^{-}\right],
	\]
	where we have used $ \varphi(v_2)=-1 $ since the branch $ \Gamma'_{v_1 v_2} $ is a pseudo-branch and the move increases the number of positive eigenvalues by 1. Corresponding to $ \vec{\ell}=\big(\vec{\ell}_1,2r+p\big) $, we have two vectors $ \vec{\ell}'=\big(\vec{\ell}_1,2r+p+1,1\big) $ and $ \vec{\ell}'=\big(\vec{\ell}_1,2r+p-1,-1\big) $, which produce the following factors:
	\[
	\frac{1}{2}A(r,p+1)t^{2r+p+1}\cdot\frac{1}{t}+\frac{1}{2}A(r-1,p+1)t^{2r+p-1}\cdot t=-\frac{1}{2}A(r,p)t^{2r+p}.
	\]
	The minus sign in the right side of the above equation cancel with the extra sign from $ (-1)^{\pi} $. Therefore, we obtain the same result.
	
	At last, if $ v_1$, $v_0$, $v_2 $ are a part of a branch, then we use the similar approach to the third case.
\end{proof}

Now we present a simple example of the $ (q,t) $-series for a 3-manifold realized by the plumbing shown in Figure~\ref{fig_Sigma_235}. It is known as Poincar\'{e} homology sphere. It is also an example of a~Brieskorn 3-sphere, denoted by~$ \overline{\Sigma(2,3,5)} $. The overline denotes the reverse of the orientation compared to the standard one.

\begin{figure}[t]\centering
	\includegraphics[scale=0.38]{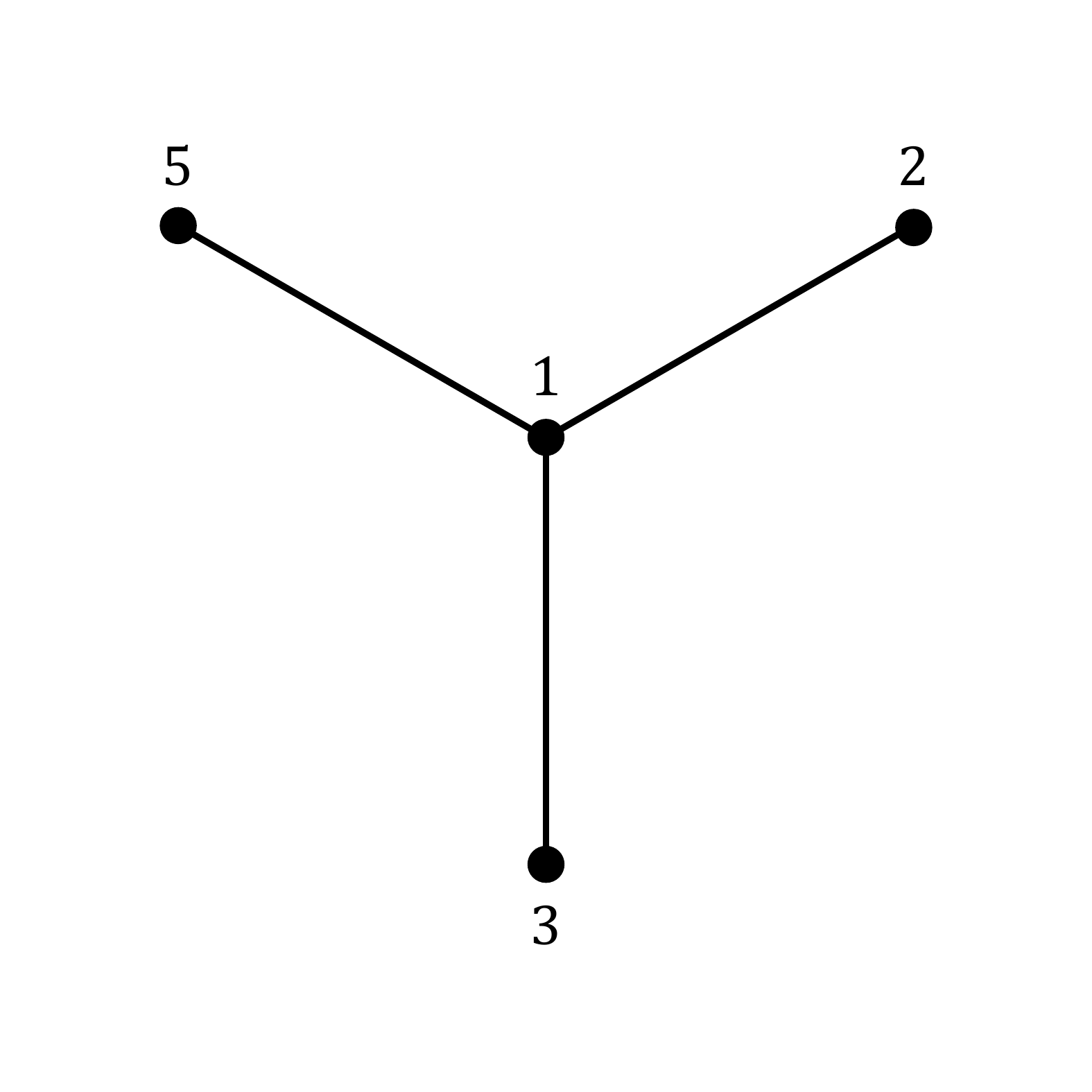}
	\caption{A plumbing graph that realizes Poincar\'{e} homology sphere $ \overline{\Sigma(2,3,5)}$.}
	\label{fig_Sigma_235}
\end{figure}

The linking matrix for the plumbing is given by
\[
M=\begin{pmatrix}
	1&1&1&1\\
	1&2&0&0\\
	1&0&3&0\\
	1&0&0&5
\end{pmatrix},
\]
and we have
\[
\pi=3, \qquad \frac{3\sigma-\sum_v m_v}{4}=-\frac{5}{4}.
\]

As $ H_1\big(\overline{\Sigma(2,3,5)},\mathbb{Z}\big)\cong 0 $, there is a unique value $ a=0 $ to consider. The formula \eqref{eqn_defn_(qt)_series} for the manifold reads as
\begin{gather*}
		\hat{Z}_0(q,t)=-\frac{1}{2}q^{-\frac{5}{4}} \text{CT}_{\vec{z}}\left\{\sum_{r=0}^{\infty}t^{2r+1}\left(z_1^{-2r-1}-z_1^{2r+1}\right)\right.\\
 \left.
\hphantom{\hat{Z}_0(q,t)=-\frac{1}{2}q^{-\frac{5}{4}} \text{CT}_{\vec{z}}}{}
 \times \left(z_2-\frac{1}{z_2}\right)\left(z_3-\frac{1}{z_3}\right)\left(z_4-\frac{1}{z_4}\right)\cdot\sum_{\vec{\ell}\in 2M\mathbb{Z}^4+\vec{\delta}}q^{-\frac{(\vec{\ell},M^{-1}\vec{\ell})}{4}}\prod_{v=1}^{4}z_v^{\ell_v}\right\}.
	\end{gather*}

Due to the operation $ \text{CT}_{\vec{z}} $, the values of $ \ell_v $ should be
\[
\ell_1=\pm(2r+1), \qquad \text{and} \qquad  \ell_v=\pm1 \quad \text{for } v=2,3,4.
\]
Observe that such vectors $ \vec{\ell} $ also satisfy the condition $ \vec{\ell}\in 2M\mathbb{Z}^4+\vec{\delta} $. Then, we obtain the $ (q,t) $-series for $ \overline{\Sigma(2,3,5)} $ as
\begin{gather}\label{eqn_(q,t)_result_Sigma235}
	\hat{Z}_0(q,t)=tq^{-\frac{3}{2}}\big(1 - {q} - q^{3} - q^{7} + t q^{8} + q^{14} + q^{20} + t^2 q^{29} - q^{31} - t^2 q^{42} - t^2 q^{57}+\cdots\big).\!\!\!
\end{gather}

Since the plumbing graph in Figure~\ref{fig_Sigma_235} has the weakly negative definite property, $ \hat{Z}_0(q,t) $ has a lower bound on the exponents of $ q $, and $ \hat{Z}_0(q,t=1) $ recovers the $ q $-series for $ \overline{\Sigma(2,3,5)} $.

\section[Recovering the q-series]{Recovering the $\boldsymbol{q}$-series}\label{section_recovering_q_series}

Let $ \Gamma $ be a reduced plumbing graph that realizes a 3-manifold $ Y=Y(\Gamma)$. By applying the formula $ \eqref{eqn_defn_(qt)_series} $, we have the invariants $ \hat{Z}_a(q,t) $ for $ Y $, which can be expressed by
\begin{equation*}
	\hat{Z}_a(q,t)=\frac{1}{{|\mathcal{S}|}}q^{\Delta_a}\sum_{\xi \in \mathcal{S}}\sum_{n\in\mathbb{Z}}C_{\xi,n}(t) q^n,
\end{equation*}
where $ \Delta_a\in\mathbb{Q} $, $ \mathcal{S} $ is the set of all possible chambers for $ \Gamma $, and $ C_{\xi,n}(t)\in \mathbb{Z}[[t]] $ is in general the formal power series in $ t $ as the coefficient of $ q^{n} $ inside a chamber $ \xi\in \mathcal{S} $.

As we have seen in the example of Section \ref{section_(qt)_series}, $ C_{\xi,n}(t) $ turns out to be a finite polynomial in~$ t $ if~$ \Gamma $ has the weakly negative (or positive) definite property. Furthermore, by simply setting $ t=1 $, we can recover the $ q $-series from the $ (q,t) $-series. Being inspired by this, it would be nice if we can recover the $q$-series by taking a limit $ t\rightarrow 1^{-} $. However, without weakly definite property, $ C_{\xi,n}(t) $ is in general infinite series and $ \lim_{t\rightarrow 1^{-}} C_{\xi,n}(t)$ might be ill-defined. This means that it would be not easy to recover the $ q $-series for such plumbings. By abuse of notation, we will denote the limit from below $ t\rightarrow 1^{-} $ by $ t\rightarrow 1 $. We notice that $ C_{\xi,n}(t) $ is convergent for $ |t|<1 $ due to the boundedness of its coefficient polynomial.

Fortunately, there are some examples of plumbings, for which it is possible to recover the $ q $-series by computing the limit $ t\rightarrow 1 $. Since the $ (q,t) $-series $ \hat{Z}_a(q,t) $ for reduced plumbings are invariant under Neumann moves, its recovered version, $\hat{Z}_a^R(q)=\lim_{t\rightarrow1} \hat{Z}_a(q,t)$ , are also invariant, therefore indeed define a topological invariant of plumbed 3-manifolds.

\begin{Remark}
	The recovered $q$-series $\hat{Z}_a^R(q)$ is exactly same as previously defined $q$-series $\hat{Z}_a(q)$ when plumbings are weakly negative definite. However, there are some examples where the recovered $q$-series are not equal to previously defined $q$-series $\hat{Z}_a(q)$, for example, weakly positive definite plumbings \cite{cheng, gukov}. Furthermore, when plumbings have strongly indefinite property, $q$-series $\hat{Z}_a(q)$ for them have not been defined, while $\hat{Z}_a^R(q)$ still exists in certain cases.
\end{Remark}

\begin{Example}
	The first example we consider is to compute the $(q,t)$-series for two reduced plumbings, shown in Figure~\ref{fig_break_weaklynd_example0}, that are equivalent by a Neumann move of type (c) in Figure~\ref{fig_neumann_moves}.
	\begin{figure}[t]
		\centering
		\includegraphics[scale=0.4]{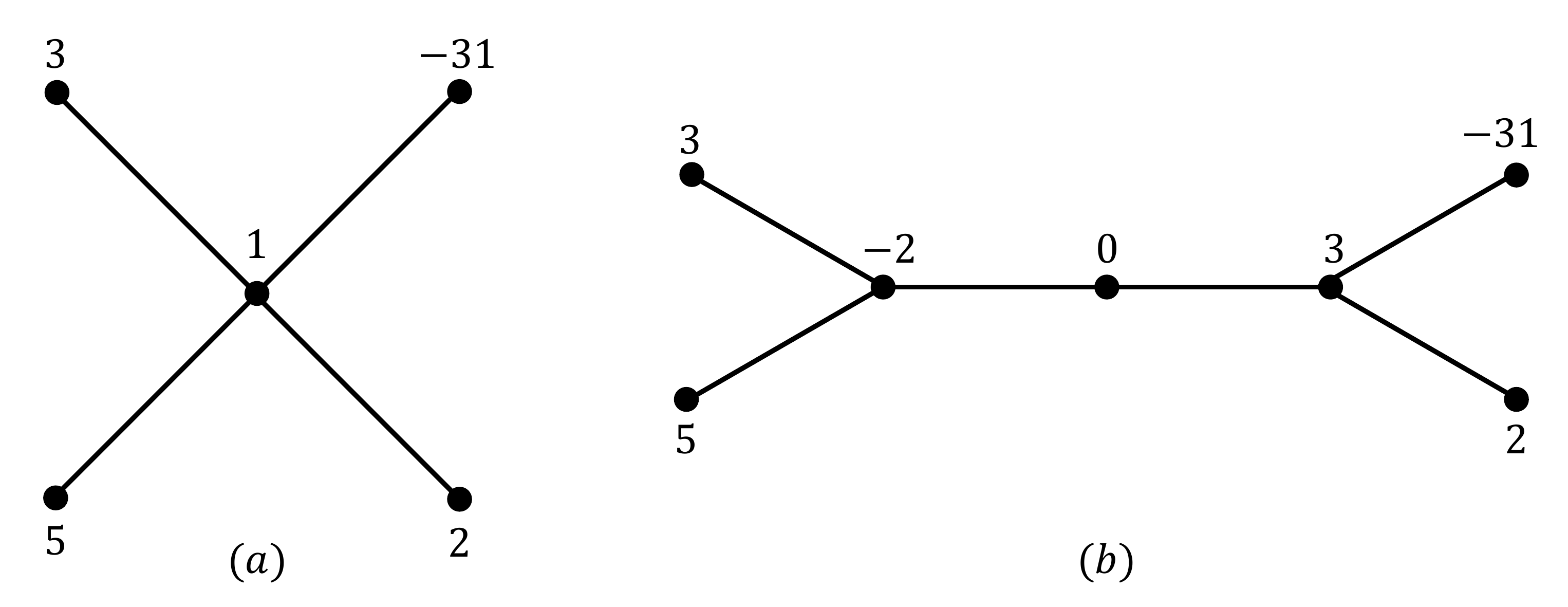}
		\caption{A Neumann move from a plumbing (a) to (b) does not preserve the weakly negative definite property.}\label{fig_break_weaklynd_example0}
	\end{figure}
	The interesting point here is that the plumbing depicted Figure~\ref{fig_break_weaklynd_example0}\,(a) has the weakly negative definite property but another plumbing in Figure \ref{fig_break_weaklynd_example0}\,(b) which is obtained by applying one Neumann move to the plumbing (a) does not have the weakly negative definite property. This means the plumbing (a) has the $q$-series by applying the formula~\eqref{eqn_defn_q_series} while the plumbing (b) does not have it. We are going to overcome such drawback by the recovering process.
	
By the formula \eqref{eqn_defn_q_series}, the $q$-series for the plumbing (a) is given by
\begin{gather}
		\label{eqn_example_4_0_q_series}
		\hat{Z}_0(q) = q^{\frac{417}{2}}\big(1-q^{29}-q^{211}+q^{252}-q^{393}+q^{442}-q^{667}+q^{726}+\cdots\big),
\end{gather}
where we notice that the determinant of the linking matrix for the plumbing is~1 and it has the unique $q$-series.
	
	The formula~\eqref{eqn_defn_(qt)_series} for the plumbing (a) reads as
	\begin{gather*}
			\hat{Z}_0(q, t) =  -\frac{1}{2}q^{\frac{23}{4}}\text{CT}_{\vec{z}}\Bigg\{\sum_{r=0}^{\infty}(r+1)t^{2r+2}\big(z_1^{2r+2}+z_1^{-2r-2}\big) \\
  \hphantom{\hat{Z}_0(q, t) =}{}
 \times\left(z_2-\frac{1}{z_2}\right)\left(z_3-\frac{1}{z_3}\right)\left(z_4-\frac{1}{z_4}\right)\left(z_5-\frac{1}{z_5}\right)\cdot \sum_{\vec{\ell}\in 2M\mathbb{Z}^5+\vec{\delta}} q^{-\frac{(\vec{\ell},M^{-1}\vec{\ell})}{4}}\prod_{v=1}^5 z_v^{\ell_v}\Bigg\},
		\end{gather*}
	which gives us the following result
	\begin{equation}
		\label{eqn_qt_result_example_4_0}
		\hat{Z}_0(q,t)=q^{\frac{417}{2}}t^2\big(1-q^{29}-q^{211}+q^{252}-q^{393}+q^{442}-q^{667}+q^{726}+\cdots\big).
	\end{equation}
	It is clear that the limit $t\rightarrow1$ of \eqref{eqn_qt_result_example_4_0} is the same as $q$-series in \eqref{eqn_example_4_0_q_series}.
	
	Moreover, one can also check that $(q,t)$-series for the plumbing (b) is equal to \eqref{eqn_qt_result_example_4_0}, which is obvious by Theorem \ref{theorem_invariance}. We note that $q$-series for the plumbing (b) does not exist because of non-weakly negative definite property, but we can associate the recovered $q$-series to it.
\end{Example}

\begin{Example}
	Let us consider a reduced plumbing depicted in Figure \ref{fig_recovering_example1}. As we will see in Section \ref{section_connected_sum}, this plumbing represents a 3-manifold which is diffeomorphic to the disjoint union of two 3-manifolds.
	\begin{figure}[t]
		\centering
		\includegraphics[scale=0.4]{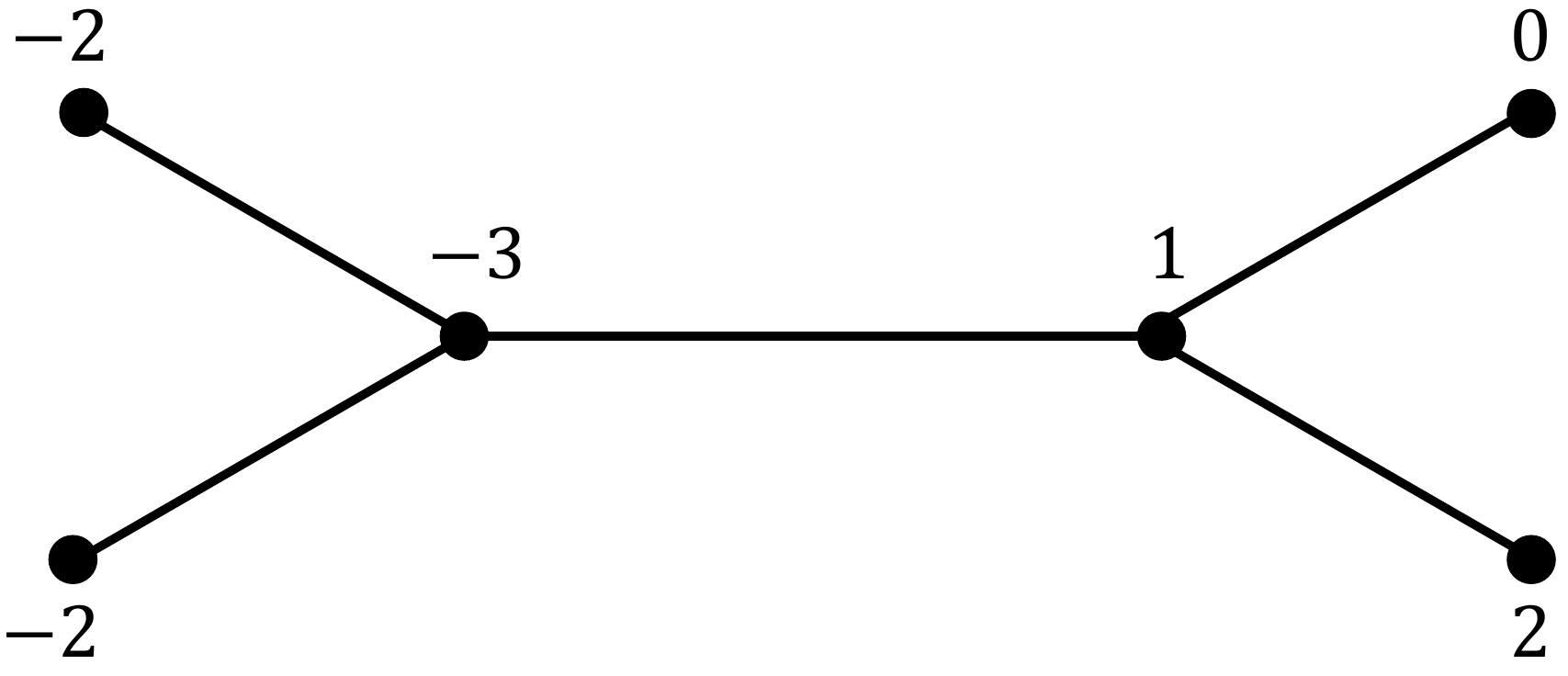}
		\caption{A plumbing graph that does not satisfy the weakly definite property.}
		\label{fig_recovering_example1}
	\end{figure}
In order to compute the $ (q,t) $-series, the first step is as usual to get the linking matrix and its inverse:
	\[
	M=\begin{pmatrix}
		-3&1&1&1&0&0\\
		1&1&0&0&1&1\\
		1&0&-2&0&0&0\\
		1&0&0&-2&0&0\\
		0&1&0&0&0&0\\
		0&1&0&0&0&2
	\end{pmatrix}, \qquad M^{-1}=\frac{1}{8}\begin{pmatrix}
		-4&0&-2&-2&4&0\\
		0&0&0&0&8&0\\
		-2&0&-5&-1&2&0\\
		-2&0&-1&-5&2&0\\
		4&8&2&2&-8&-4\\
		0&0&0&0&-4&4
	\end{pmatrix}.
	\]
	Then, we have the following quantities
	\[
	\det M=16, \qquad \pi=\pi(M)=2,\qquad \frac{3\sigma-\sum_v m_v}{4}=-\frac{1}{2}
	\]
	and the set of chambers is given by
	\[
	\mathcal{S}=\{(1,1),(1,-1),(-1,1),(-1,-1)\},
	\]
	because no pseudo bridge is contained in the plumbing. Notice that the submatrix of $ M $ corresponding to the high-valency vertices
	\[
	M_h=\begin{pmatrix}
		-1/2&0\\
		0&0
	\end{pmatrix}
	\]
	is obviously not negative or positive definite, which means that the $ q $-series $ \hat{Z}(q) $ is ill-defined.
	
Among 16 $ \text{Spin}^c $ structures, in this example for simplicity we consider only one of them, that is coming from $ (0, 0, \dots, 0)\in \operatorname{Coker} M $ by the following identification
	\[
	\text{Spin}^c(Y)\cong 2\operatorname{Coker}M+\vec{\delta}.
	\]
	Then the application of the formula \eqref{eqn_defn_(qt)_series} to the plumbing leads to the following result:
	\begin{gather}
			\hat{Z}_0(q,t)= \frac{1}{4}q^{\frac{5}{8}}\big[\cdots+\big({-}t^6+t^8-t^{10}+t^{12}-t^{14}+\cdots\big)q^{-3}\nonumber\\
\hphantom{\hat{Z}_0(q,t)=}{}
+\big({-}t^{4}+t^6-t^{8}+t^{10}-t^{12}+\cdots\big)q^{-2}+\big({-}t^2+t^4-t^6+t^{8}-t^{10}+\cdots\big)q^{-1}\nonumber\\
\hphantom{\hat{Z}_0(q,t)=}{} +\big(t^2-2t^4+t^{6}-t^{8}+t^{10}+\cdots\big)q^0+\big(2t^4-3t^6+t^{8}-t^{14}+t^{16}+\cdots\big)q\nonumber\\
\hphantom{\hat{Z}_0(q,t)=}{}+\big({-}t^4+3t^6-2t^8-t^{12}+t^{14}+\cdots\big)q^2+\big(t^4-t^6+2t^8-3t^{10}+t^{12}+\cdots\big)q^3\nonumber\\
\hphantom{\hat{Z}_0(q,t)=}{}+\big(t^{6}-2t^8+3t^{10}-2t^{12}-t^{20}+\cdots\big)q^4+\cdots\big].\label{eqn_result1_qt_series_example_4_1}
		\end{gather}
	
	Now let us compute $ \lim_{t\rightarrow 1} \hat{Z}_0(q,t)$ to recover the $ q $-series. To do this, we need to find out the analytical property of the series $ C_{\xi,n}(t) $, the coefficients of the term $ q^{n} $ in \eqref{eqn_result1_qt_series_example_4_1} from the contribution of the chamber $ \xi\in \mathcal{S} $. By the role of the action $ \text{CT}_{\vec{z}} $ in \eqref{eqn_defn_(qt)_series} it follows that the vectors $ \vec{\ell}=(\ell_1,\ell_2,\ell_3,\ell_4,\ell_5,\ell_6) $ should be of the form
	\[
	\ell_1=\xi_1(2r+1), \qquad\ell_2=\xi_2(2s+1), \qquad\ell_i=\pm1  \quad \text{for } i=3,4,5,6,
	\]
	where $ \xi_1 $ and $ \xi_2 $ are the coordinates of the chamber vector $ \xi=(\xi_1,\xi_2)\in \{\pm 1\}^2 $. Therefore, $ C_{\xi,n}(t) $ is given by
	\begin{equation}\label{eqn_t_series_indefinite_example1}
		C_{\xi,n}(t)= \sum_{r,s=0}^{\infty}\sum_{\varepsilon_i\in\{\pm1\}} \xi_1\xi_2\varepsilon_3\varepsilon_4\varepsilon_5\varepsilon_6 \,t^{2r+2s+2} \Big|_{-\frac{1}{2}-\frac{(\vec{\ell},M^{-1}\vec{\ell})}{4}=\frac{5}{8}+n},
	\end{equation}
	where $ \vec{\ell} $ is the vector of the form
	\[
	\vec{\ell}=\big(\xi_1(2r+1), \xi_2(2s+1), \varepsilon_3,\varepsilon_4,\varepsilon_5,\varepsilon_6\big)
	\]
	and the convention
	\[
 \sum_{r,s}\sum_{\varepsilon_i}\Big|_{Q(\vec{\ell})=0}
	\]
	denotes that the only $ r$, $s $ and $ \varepsilon_i $ satisfying the quadratic equation $ Q(\vec{\ell})=0 $ contribute to the summations.
	
	By swapping the order of sums in \eqref{eqn_t_series_indefinite_example1}, we get
	\begin{equation}\label{eqn_C_(xi,n)_t_indefinite_example1}
		C_{\xi,n}(t)= \sum_{\varepsilon_i\in\{\pm1\}} \xi_1\xi_2\varepsilon_3\varepsilon_4\varepsilon_5\varepsilon_6  \sum_{r,s=0}^{\infty}t^{2r+2s+2} \Big|_{-\frac{1}{2}-\frac{(\vec{\ell},M^{-1}\vec{\ell})}{4}=\frac{5}{8}+n},
	\end{equation}
	which means that given a chamber $ \xi\in \mathcal{S} $ and chosen $ \varepsilon_i $'s, we have the sum of monomials in $ t $ with powers of $ 2r+2s+2 $ from all non-negative integer solutions of the quadratic equation in two variables $ r $ and $ s $
	\[
	-\frac{1}{2}-\frac{\big(\vec{\ell},M^{-1}\vec{\ell}\,\big)}{4}=\frac{5}{8}+n,
	\]
	moreover, the product $\xi_1\xi_2\varepsilon_3\varepsilon_4\varepsilon_5\varepsilon_6$ determines the sign of the sum over $ r $ and $ s $. Therefore, we naturally move on to the solutions of the quadratic Diophantine equations (QDEs). In Appendix~\ref{appendix_QDE}, we briefly review the algorithm to solve the QDEs in two variables for convenience.
	
	In general, the quadratic form from a plumbing with two high-valency vertices
	\[
	\frac{3\sigma-\sum_v m_v}{4}-\frac{\big(\vec{\ell},M^{-1}\vec{\ell}\,\big)}{4}=\Delta_a+n
	\]
	is a quadratic equation with rational coefficients, but multiplying it by $ 4\det M $ we can obtain the one with integer coefficients. For a fixed chamber $ \xi\in \mathcal{S} $ and chosen $ \varepsilon_i $'s, we denote the quadratic equation with integer coefficients by
	\[
	Q_{\xi,\vec{\varepsilon}}=4\det M\left(\frac{3\sigma-\sum_v m_v}{4}-\frac{\big(\vec{\ell},M^{-1}\vec{\ell}\,\big)}{4}-\Delta_a-n\right)=0.
	\]
	
	By putting the solutions of quadratic equations into~\eqref{eqn_C_(xi,n)_t_indefinite_example1}, we obtain the explicit formula for~$ C_{\xi,n}(t) $, for example, $ n=2 $ as follows:
	\begin{gather}
C_{\xi_{++},2}(t) =-t^2+t^4+2t^6+\sum_{k=1}^{\infty}\big(t^{4k^2+6k-4}+t^{4k^2+10k}+t^{4k^2+6k-2}+t^{4k^2+2k-4}\big),\nonumber\\
C_{\xi_{+-},2}(t) =-t^4-3t^6-2t^8+\sum_{k=1}^{\infty}\big({-}t^{4k^2+14k+4}-t^{4k^2+10k-2}-t^{4k^2+6k-4}-t^{4k^2+10k}\big), \!\!\!\label{eqn_C_(xi,2)_t_indefinite_example1}
		\end{gather}
	where $ \xi_{++}=\{+1,+1\} $ and $ \xi_{+-}=\{+1,-1\} $. We notice that $ C_{\xi_{++},n}(t)=C_{\xi_{--},n}(t) $ and $ C_{\xi_{+-},n}(t)=C_{\xi_{-+},n}(t) $ due to the quadratic form. Therefore, it is enough to evaluate the limit of the form
	\[
	\lim_{t\rightarrow 1}\sum_{k=0}^{\infty} t^{a(k+b)^2+c}, \qquad \text{for } a>0.
	\]
	
	First, we change the variable by using $ t={\rm e}^{-\epsilon} $, then we have
	\[
	\lim_{t\rightarrow 1}\sum_{k=0}^{\infty} t^{a(k+b)^2+c}=\lim_{\epsilon\rightarrow 0^{+}} \sum_{k=0}^{\infty} {\rm e}^{-\epsilon(a(k+b)^2+c)}.
	\]
	We recall the Euler--Maclaurin formula
	\begin{gather*}
\sum_{k=N_i}^{N_f}f(k) =\int_{N_i}^{N_f}f(x){\rm d}x+\frac{1}{2}(f(N_f)+f(N_i))\\
\hphantom{\sum_{k=N_i}^{N_f}f(k) =}{}
+\sum_{i=2}^{j}\frac{b_i}{i!}\big[f^{(i-1)}(N_f)-f^{(i-1)}(N_i)\big]-\int_{N_i}^{N_f}\frac{B_j(\{1-x\})}{j!}f^{(j)}(x){\rm d}x,
\end{gather*}
where $ N_i $ and $ N_f $ are real numbers such that $ N_f-N_i $ is a positive integer number, $ B_j $ and~$ b_j$ denote Bernoulli polynomials and numbers, respectively, $ j $ is any positive integer and $ \{x\} $ denotes the fractional part of a real number~$ x $. Applying the Euler--Maclaurin formula, we have%
	\begin{equation*}
		\sum_{k=0}^{\infty} {\rm e}^{-\epsilon(a(k+b)^2+c)}=\int_{0}^{\infty}{\rm e}^{-\epsilon a(x+b)^2-\epsilon c}{\rm d}x+1+O(\epsilon) \qquad \text{as } \epsilon\rightarrow 0,
	\end{equation*}
	then the evaluation of Gaussian integral returns
	\begin{equation}\label{eqn_result_euler_maclaurin_example_4_1}
		\sum_{k=0}^{\infty} {\rm e}^{-\epsilon(a(k+b)^2+c)}=1-b+\frac{1}{2}\sqrt{\frac{\pi}{\epsilon a}}+O\big(\sqrt{\epsilon}\big),
	\end{equation}
	where we have used the error function's Maclaurin series
	\[
	\text{erf}(x)=\frac{2}{\sqrt{\pi}}\int_{0}^{x} {\rm e}^{-y^2}{\rm d}y=\frac{2}{\sqrt{\pi}}\sum_{j=0}^{\infty}\frac{(-1)^j x^{2j+1}}{j!(2j+1)}.
	\]	
	By substituting \eqref{eqn_result_euler_maclaurin_example_4_1} into \eqref{eqn_C_(xi,2)_t_indefinite_example1}, we obtain
	\[
		C_{\xi_{++},2}(\epsilon)=3+\sqrt{\frac{\pi}{\epsilon}}+O\big(\sqrt{\epsilon}\big),\qquad
		C_{\xi_{+-},2}(\epsilon)=-5-\sqrt{\frac{\pi}{\epsilon}}+O\big(\sqrt{\epsilon}\big).
	\]
	Even though the term $ \sqrt{\frac{\pi}{\epsilon}} $ is singular at $ \epsilon\rightarrow 0 $, this singularity will disappear when we take a~sum over all possible chambers. Thus, we see that the coefficient of $ q^{\frac{21}{8}} $ in the recovered $ q $-series is finite and equal to $ -5 $.
	
By using a similar approach, we obtain the following recovered $ q $-series
\[
\hat{Z}_0^R(q)=\lim_{t\rightarrow 1} \hat{Z}_0^R(q,t)=q^{\frac{5}{8}}\big(\cdots+q^{-3}+q^{-2}+q^{-1}-1-q-q^2-q^3-q^4+\cdots\big),
\]
	where the superscript denotes the recovered $q$-series.
\end{Example}

In the previous example, we have seen that for some plumbings without weakly definiteness the $ (q,t) $-series can be recovered into the $ q $-series by computing the limit $ t\rightarrow 1 $. However, this does not hold for all strongly indefinite plumbings. The next example shows a such case.

\begin{Example}
	We consider a reduced plumbing shown in Figure \ref{fig_indefinite_example}. We note that just for simplicity we choose a plumbing whose linking matrix has determinant 1 such that there is a~unique $ \text{Spin}^c $ structure $ a=0 $.
	
	\begin{figure}[t]
		\centering
		\includegraphics[scale=0.4]{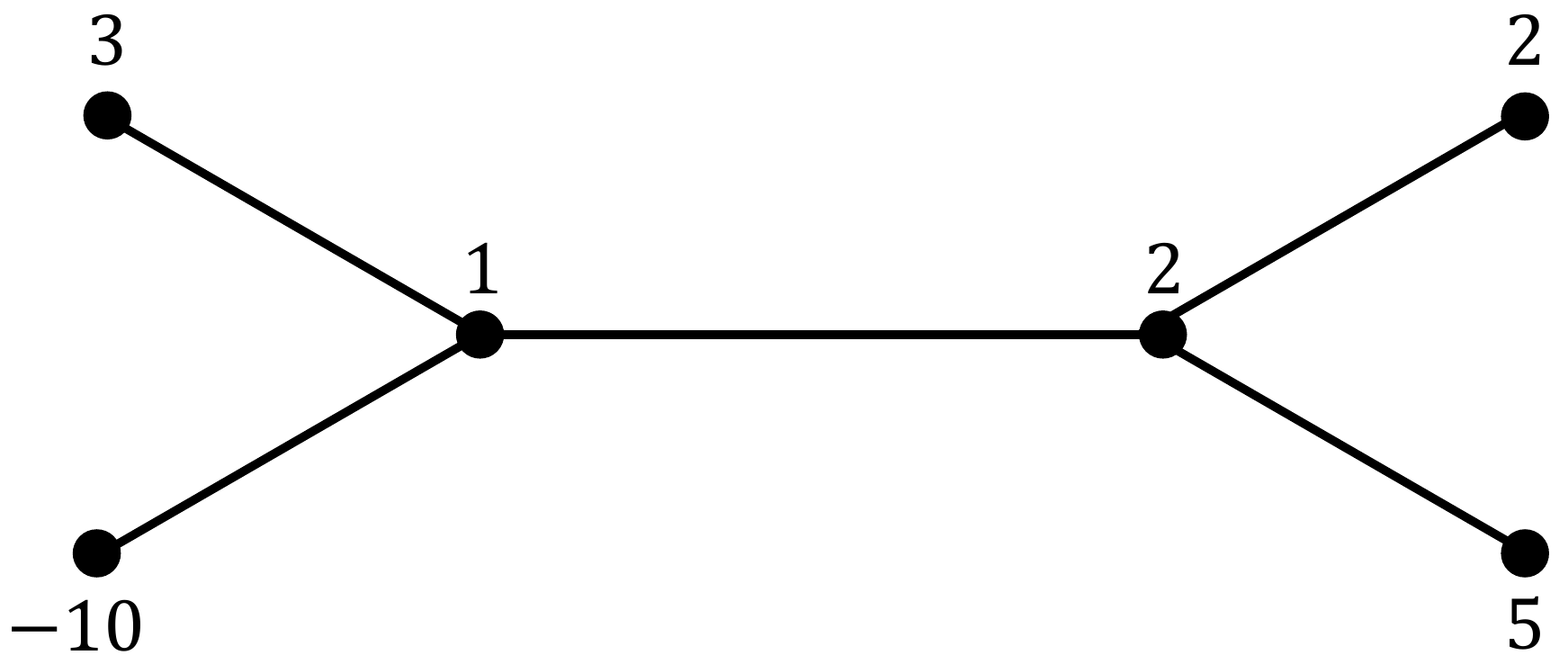}
		\caption{A plumbing graph that has the strongly indefinite property and whose linking matrix has determinant equal to 1.}
		\label{fig_indefinite_example}
	\end{figure}
	For this plumbing, the linking matrix and its inverse are given by
	\[
	M=\begin{pmatrix}
		1&1&1&1&0&0\\
		1&2&0&0&1&1\\
		1&0&3&0&0&0\\
		1&0&0&-10&0&0\\
		0&1&0&0&2&0\\
		0&1&0&0&0&5\\
	\end{pmatrix},\qquad M^{-1}=\begin{pmatrix}
		-390&300&130&-39&-150&-60\\
		300&-230&-100&30&115&46\\
		130&-100&-43&13&50&20\\
		-39&30&13&-4&-15&-6\\
		-150&115&50&-15&-57&-23\\
		-60&46&20&-6&-23&-9
	\end{pmatrix}.
	\]
	From the linking matrix, we obtain the following quantities
	\[
	\pi=\pi(M)=4,\qquad \frac{3\sigma-\sum_{v}m_v}{4}=\frac{3}{4}.
	\]
	Since there is no pseudo bridge in the plumbing, the set $ \mathcal{S} $ of all possible chambers is given by
	\[
	\mathcal{S}=\{(1,1),(1,-1),(-1,1),(-1,-1)\}.
	\]
	Therefore, the formula \eqref{eqn_defn_(qt)_series} reads
	\begin{gather}\label{eqn_defn_IndefiniteExample_(qt)_series}
\hat{Z}_0(q,t) =\frac{1}{4}q^{\frac{3}{4}}\text{CT}_{\vec{z}}\left\{\sum_{r=0}^{\infty}t^{2r+1}\big(z_1^{2r+1}-z_1^{-2r-1}\big)\right. \sum_{s=0}^{\infty}t^{2s+1}\big(z_2^{2s+1}-z_2^{-2s-1}\big)\\
 \left.
 \hphantom{\hat{Z}_0(q,t) =}{}
 \times \left(z_3-\frac{1}{z_3}\right)\left(z_4-\frac{1}{z_4}\right)\left(z_5-\frac{1}{z_5}\right)\left(z_6-\frac{1}{z_6}\right)\sum_{\vec{\ell}\in 2M\mathbb{Z}^6+\vec{\delta}}q^{-\frac{(\vec{\ell},M^{-1}\vec{\ell})}{4}}\prod_{v=1}^{6}z_v^{\ell_v}\right\}.\nonumber
\end{gather}
	The application of the formula \eqref{eqn_defn_IndefiniteExample_(qt)_series} gives the following result:
	\begin{gather*}
			\hat{Z}_0(q,t) =\frac{1}{2}q^{\frac{1}{2}}\big[\cdots+\big(2t^{30}+t^{46}-t^{108}+\cdots\big)q^{-4}+\big(t^8-t^{10}-t^{22}+\cdots\big)q^{-3}\\
\hphantom{\hat{Z}_0(q,t) =}{}
+\big(2t^6-t^8-t^{16}+\cdots\big)q^{-2}+\big(t^8+t^{12}+t^{28}+\cdots\big)q^{-1}\\
\hphantom{\hat{Z}_0(q,t) =}{}+\big(2t^2-t^{12}-t^{564}+\cdots\big)+\big({-}t^2+t^{16}+t^{36}+\cdots\big)q\\
\hphantom{\hat{Z}_0(q,t) =}{}+\big({-}t^6-t^{18}-t^{34}+\cdots\big)q^3+\big({-}t^{12}-t^{20}-t^{60}+\cdots\big)q^4+\cdots\big].
		\end{gather*}
	We note that all the coefficients of $ q^n $ in this result are infinite series in $ t $ because of the strong indefiniteness of the linking matrix.
	
	For a chosen chamber $ \xi=(\xi_1,\xi_2)\in\{\pm1\}^2 $, we know that the vectors $ \vec{\ell}=(\ell_1,\ell_2,\ell_3,\ell_4,\ell_5,\ell_6) $ should be of the form
	\[
	\ell_1=\xi_1(2r+1), \qquad \ell_2=\xi_2(2s+1), \qquad \ell_i=\pm1 \quad \text{for } i=3,4,5,6,
	\]
	and $ C_{\xi,n}(t) $ is given by
	\begin{equation}\label{eqn_C_(xi,n)_t_indefinite_example}
		C_{\xi,n}(t)= \sum_{\varepsilon_i\in\{\pm1\}} \xi_1\xi_2\varepsilon_3\varepsilon_4\varepsilon_5\varepsilon_6 \,\sum_{r,s=0}^{\infty}t^{2r+2s+2} \Big|_{\frac{3}{4}-\frac{(\vec{\ell},M^{-1}\vec{\ell})}{4}=\frac{1}{2}+n}.
	\end{equation}
	
	According to the algorithm in Appendix~\ref{appendix_QDE}, the nonnegative integer solutions of
	\[
	Q_{\xi,\vec{\varepsilon}}=\frac{3}{4}-\frac{\big(\vec{\ell},M^{-1}\vec{\ell}\,\big)}{4}-\frac{1}{2}-n=0
	\]
	are determined by a finite number of families of solutions, and each family $ \{(r_k,s_k)\}_{k} $ of solutions has the following form:
	\begin{gather*}
			r_k =\alpha_r A^k +\beta_r A^{-k}+\eta_r,\qquad
			s_k =\alpha_s A^k +\beta_s A^{-k}+\eta_s,
		\end{gather*}
	where $ k\geq k_0\in \mathbb{Z}_{+} $. Here the real numbers $ \alpha_r$, $\alpha_s$, $\beta_r$, $\beta_s$, $\eta_r$, $\eta_s $ are constants which do depend on the choice of the chamber $ \xi $ and the vector $ \vec{\varepsilon} $. However, the $ A $ does not depend on~$ \xi $ and~$ \vec{\varepsilon} $. This is because all QDEs $ Q_{\xi,\vec{\varepsilon}}=0 $ from different $ \xi $ and $ \vec{\varepsilon} $ can be transformed into the same generalized Pell's equation. From the requirement for non-negative solutions, it follows that $ \alpha_r $ and $ \alpha_s $ are positive real numbers and $ A>1 $.
	
	For each family of solutions, we obtain a series in $ t $ by \eqref{eqn_C_(xi,n)_t_indefinite_example} as following:
	\[
	\sum_{k\geq k_0} t^{2(\alpha_r+\alpha_s)A^k+2(\beta_r+\beta_s)A^{-k}+2(\eta_r+\eta_s)}.
	\]
	In order to compute the limit $ t\rightarrow 1 $, we use the substitution $ t\rightarrow {\rm e}^{-\epsilon} $
	\[
	\sum_{k\geq k_0} {\rm e}^{-2\epsilon[(\alpha_r+\alpha_s)A^k+(\beta_r+\beta_s)A^{-k}+\eta_r+\eta_s]}.
	\]
	
	By applying doubly exponential series transformations of Ramanujan \cite{ramanujan}, we can estimate the above series as follows:
	\begin{gather}
\sum_{k\geq k_0} {\rm e}^{-2\epsilon[(\alpha_r+\alpha_s)A^k+(\beta_r+\beta_s)A^{-k}+\eta_r+\eta_s]}\nonumber\\
\quad{}\sim\frac{1}{2}+\frac{1}{\log A}\left[-\log\epsilon-\gamma-\log2-\log(\alpha_r+\alpha_s)\right]-k_0+O(\epsilon)\label{eqn_estimation_epsilon_exponential_series}\\
\quad{}+2\sum_{k=1}^{\infty}\sqrt{\frac{\log A}{2k \sinh{\frac{2k\pi^2}{\log A}}}}\cos\left(\frac{2k\pi}{\log A}\log\frac{k\pi}{\epsilon(\alpha_r+\alpha_s)\log A}-\frac{2k\pi}{\log A}-\frac{\pi}{4}-\frac{B_2\log A}{4k\pi}+\cdots\right),\nonumber
		\end{gather}
	where $ \gamma $ is the Euler's constant $ \gamma=0.57721\dots $ and $ B_k $ denotes the $ k $th Bernoulli number defined by
	\[
	\frac{z}{{\rm e}^z-1}=\sum_{k=0}^{\infty}\frac{B_k}{k!}z^k, \qquad |z|<2\pi.	
	\]
	
	Now let us analyse the terms in the right side of \eqref{eqn_estimation_epsilon_exponential_series}. First, the second term proportional to $ \log \epsilon $ has a singularity at $ \epsilon\rightarrow 0 $, but this term will be cancelled out by taking a sum of $ C_{\xi,n}(t) $ over all choices of chambers $ \xi\in \mathcal{S} $, which allows us to ignore this term. Next, the infinite series of oscillating terms has a singularity as well. Furthermore, this terms do not disappear by a~sum of $ C_{\xi,n}(t) $. Thus, we conclude that due to such series of oscillating terms the limit $ t\rightarrow 1 $ is ill-defined and the recovered $ q $-series does not exist.
	
\end{Example}

\begin{Example}
	\label{example_nonreduced_plumbing}
	In this example, we consider a non-reduced plumbing shown in Figure \ref{fig_nonreduced235_example}, which can be obtained by Neumann moves from the one in Figure \ref{fig_Sigma_235}. Hence, the 3-manifold realized by the plumbing graph in Figure \ref{fig_nonreduced235_example} is homeomorphic to the Brieskorn sphere $ \overline{\Sigma(2,3,5)} $. One can easily check that the high-valency vertex with weight 3, placed in the right side of the plumbing, is a pseudo high-valency vertex because the branches are both pseudo.
	
	\begin{figure}[t]
		\centering
		\includegraphics[scale=0.4]{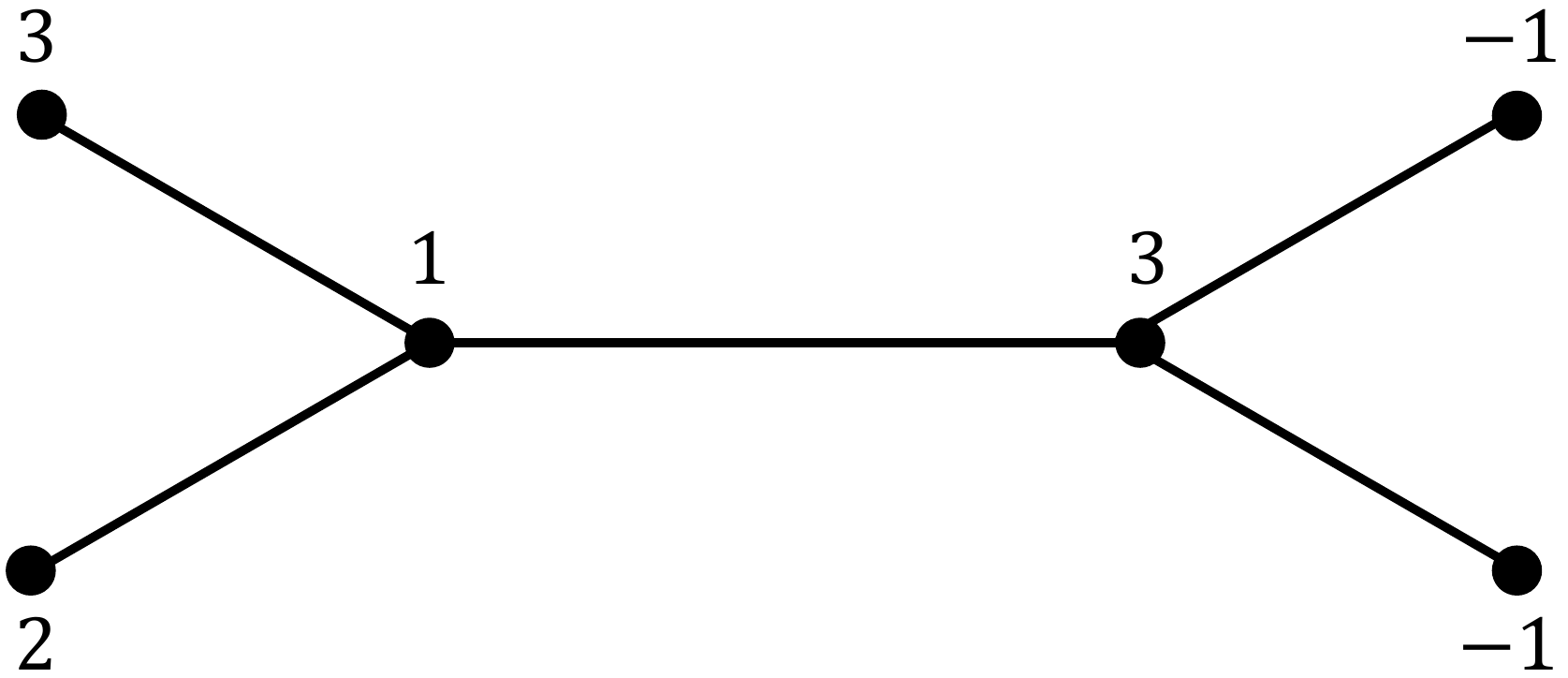}
		\caption{A non-reduced plumbing graph that realizes the 3-manifold $ \overline{\Sigma(2,3,5)}$.}\label{fig_nonreduced235_example}
	\end{figure}
	
	We have already computed the $ (q,t) $-series \eqref{eqn_(q,t)_result_Sigma235} in the last part of Section \ref{section_(qt)_series}. Now we are going to compute the $ (q,t) $-series for the non-reduced plumbing in Figure \ref{fig_nonreduced235_example} and compare it with the result in \eqref{eqn_(q,t)_result_Sigma235}.
	
	The linking matrix and its inverse of the plumbing are given by
	\[
	M=\begin{pmatrix}
		1&1&1&1&0&0\\
		1&3&0&0&1&1\\
		1&0&3&0&0&0\\
		1&0&0&2&0&0\\
		0&1&0&0&-1&0\\
		0&1&0&0&0&-1
	\end{pmatrix}, \qquad M^{-1}=\begin{pmatrix}
		-30&6&10&15&6&6\\
		6&-1&-2&-3&-1&-1\\
		10&-2&-3&-5&-2&-2\\
		15&-3&-5&-7&-3&-3\\
		6&-1&-2&-3&-2&-1\\
		6&-1&-2&-3&-1&-2
	\end{pmatrix}.
	\]
	
	The formula \eqref{eqn_defn_(qt)_series} reads
	\begin{gather*}
\hat{Z}_0(q,t)=\frac{1}{4}q^{-\frac{7}{4}}\text{CT}_{\vec{z}}\left\{\sum_{r=0}^{\infty}t^{2r+1}\big(z_1^{2r+1}-z_1^{-2r-1}\big)\right.
\left(z_3-\frac{1}{z_3}\right)\left(z_4-\frac{1}{z_4}\right)\\
\hphantom{\hat{Z}_0(q,t)=}{}
\times \sum_{s=0}^{\infty}t^{2s+1}\left[z_2^{2s+1}\left(z_5 t-\frac{1}{z_5 t}\right)\left(z_6 t-\frac{1}{z_6 t}\right)-z_2^{-2s-1}\left(\frac{z_5}{t}-\frac{t}{z_5}\right)\left(\frac{z_6}{t}-\frac{t}{z_6}\right)\right]\\
\left.
\hphantom{\hat{Z}_0(q,t)=}{} \times\sum_{\vec{\ell}\in 2M\mathbb{Z}^6+\vec{\delta}}q^{-\frac{(\vec{\ell},M^{-1}\vec{\ell})}{4}}\prod_{v=1}^{6}z_v^{\ell_v}\right\},
	\end{gather*}
	and its computation gives an interesting result as follows:
	\begin{gather*}
		\hat{Z}_0(q,t)=\frac{1+t^2}{2}q^{-\frac{3}{2}}\big(1 - {q} - q^{3} - q^{7} + t q^{8} + q^{14} + q^{20} + t^2 q^{29} - q^{31} - t^2 q^{42} +\cdots\big).
	\end{gather*}
	Here one might be curious about the result that coefficients of monomials in $ q $ are finite polynomials in $ t $ even though the plumbing is strongly indefinite. The key to the answer is that there are cancellations between each pair of vectors $ \vec{\ell}_1=(\dots, +1) $ and $ \vec{\ell}_2=(\dots, -1) $, where~$ \pm 1 $ denotes the element of the vector $ \vec{\ell} $ associated to valency one vertex on any pseudo branch.
	
	We notice that except an extra factor $ \frac{1+t^2}{2t} $, this result completely agrees with \eqref{eqn_(q,t)_result_Sigma235}, furthermore the recovered $ q $-series by taking $ t=1 $ are the same.
\end{Example}

Let us reiterate that Theorem \ref{theorem_invariance} says that the $ (q,t) $-series for a reduced plumbing defined in~\eqref{eqn_defn_(qt)_series} is unchanged by the Neumann moves, thus the recovered $ q $-series also remains unchanged if the limit $ t\rightarrow 1 $ exists. The Example~\ref{example_nonreduced_plumbing} tells us that the $ (q,t) $-series for a non-reduced plumbing is changed by Neumann moves, but the recovered $ q $-series might be still preserved. This property indeed holds for non-reduced plumbings with at most two high-valency vertices.

\begin{Proposition}
	\label{prop_recovered_q_series_preserved}
	Assume that the $ (q,t) $-series for a given plumbing can be finitely recovered $ q $-series in a sense that the limit $ t\rightarrow 1 $ is finite. Then the recovered $q$-series is preserved by arbitrary Neumann moves even for non-reduced plumbings with at most two high-valency vertices.
\end{Proposition}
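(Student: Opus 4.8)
The plan is to reduce the statement to a single elementary manipulation of the constant-term formula~\eqref{eqn_defn_(qt)_series}. Let $\Gamma$ and $\Gamma'$ be two plumbings, each with at most two high-valency vertices, representing the same $3$-manifold, and let $\Gamma_0$, $\Gamma'_0$ be their reductions, obtained by successively collapsing pseudo high-valency vertices by Neumann moves (this process terminates, and both reductions again have at most two high-valency vertices). Since $\Gamma_0$ and $\Gamma'_0$ are reduced and realize the same $3$-manifold, Proposition~\ref{proposition_relation_reduced_plumbings} and Theorem~\ref{theorem_invariance} give $\hat Z_a(q,t;\Gamma_0)=\hat Z_a(q,t;\Gamma'_0)$, hence $\hat Z_a^R(q;\Gamma_0)=\hat Z_a^R(q;\Gamma'_0)$. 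So it is enough to show that $\hat Z_a^R(q;\Gamma)=\hat Z_a^R(q;\Gamma_0)$ whenever one of these limits is finite, $\Gamma_0$ being the reduction of $\Gamma$.

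Next I would examine the finite sequence of Neumann moves taking $\Gamma$ to $\Gamma_0$. Each such move is either one of those already handled in the proof of Theorem~\ref{theorem_invariance} — a move internal to a branch or bridge, which leaves $\hat Z_a(q,t)$ unchanged even on a pseudo branch or bridge — or a move of the new type, namely the deletion of a valency-one vertex of weight $\pm1$ lying on a pseudo branch and adjacent to a high-valency vertex $w$, which lowers $\deg(w)$ by one. The key claim is that a move of this new type multiplies the $(q,t)$-series by a fixed rational function $\rho(t)$ that is regular at $t=1$ with $\rho(1)=1$ (in the basic case $\rho(t)=\tfrac{1+t^{2}}{2t}=\tfrac12\big(t+t^{-1}\big)$, which is exactly Example~\ref{example_nonreduced_plumbing}). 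To prove this I would write~\eqref{eqn_defn_(qt)_series} for the two graphs side by side: the prefactors $(-1)^{\pi}$ and $q^{(3\sigma-\sum_v m_v)/4}$ transform as in the move-(b) analysis of Theorem~\ref{theorem_invariance}, and the resulting $q$-discrepancy is absorbed by~\eqref{eqn_ells_relation_move_B} together with the fact that the deleted vertex $v$ carries an entry $\ell_v=\pm1$ forced by $\text{CT}_{\vec z}$. What remains is the effect on the discriminant factor attached to $w$: in $\Gamma$ it contains the extra factor $\mathcal D_v^{\varphi(v)}(z_v,t)=z_v t^{\varphi(v)}-t^{-\varphi(v)}z_v^{-1}$ with $\varphi(v)=\pm1$ (the branch $\Gamma_{wv}$ being pseudo), while in $\Gamma_0$ the exponent $p_w$ has dropped by one (or $w$ is no longer high-valency, in which case its contribution to the discriminant simplifies accordingly). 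Carrying out $\text{CT}_{z_w,z_v}$ and using the binomial identity $A(r,p)=\sum_{r_1+r_2=r}A(r_1,p_1)A(r_2,p_2)$ together with the two geometric resummations coming from the $z_v^{\pm1}t^{\pm\varphi(v)}$ terms, the double sum telescopes and the two series are seen to differ only by $\rho(t)$.

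Granting the factorization, the conclusion is immediate. The standard moves do not change $\hat Z_a(q,t)$, and the finitely many new moves multiply it by fixed factors $\rho_1(t),\dots,\rho_m(t)$; so, writing $\hat Z_a(q,t;\Gamma)=q^{\Delta_a}\sum_n C_n(t)\,q^n$ and $\hat Z_a(q,t;\Gamma_0)=q^{\Delta_a}\sum_n C_n^0(t)\,q^n$, we get $C_n(t)=\rho(t)\,C_n^0(t)$ for all $n$, with $\rho=\prod_j\rho_j$ regular and nonzero at $t=1$ and $\rho(1)=1$. Hence $\lim_{t\to1}C_n(t)$ is finite if and only if $\lim_{t\to1}C_n^0(t)$ is, and the two limits coincide; this is precisely $\hat Z_a^R(q;\Gamma)=\hat Z_a^R(q;\Gamma_0)$. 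Together with the first paragraph this gives the proposition. The hypothesis of at most two high-valency vertices is used only to keep $\Gamma$, all intermediate graphs, and $\Gamma_0$ within the class where $\hat Z_a(q,t)$ and the recovery analysis of this section apply, and to keep the case analysis for the new move finite.

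The hard part is the factorization claim of the second paragraph. One has to control simultaneously the chamber average $\tfrac1{|\mathcal S|}\sum_{\xi\in\mathcal S}$, the signs $(-1)^{p_w}$ and $(-1)^{\pi}$, the exponents $\varphi(v)$, the $q$-shift from~\eqref{eqn_ells_relation_move_B}, and the theta function $\Theta_a^M(\vec z)$, and then check that after $\text{CT}_{\vec z}$ the individual powers of $t$ recombine into one ratio independent of the exponent of $q$ — the general-formula incarnation of the cancellations between the pairs $\vec\ell=(\dots,+1)$ and $\vec\ell=(\dots,-1)$ noted in Example~\ref{example_nonreduced_plumbing}. Verifying this in all the sub-cases — according to whether the collapsed pseudo branch sits at an isolated high-valency vertex or next to the other one, and whether $w$ stays high-valency or not — is where the real work is concentrated.
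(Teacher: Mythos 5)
Your overall reduction (it suffices to check moves that create a pseudo high-valency vertex, since all other moves are covered by Theorem~\ref{theorem_invariance}) matches the paper, but the core of your argument -- the claim that such a move multiplies the whole $(q,t)$-series by a fixed function $\rho(t)$, regular at $t=1$ with $\rho(1)=1$ -- is a genuine gap, and it is not what happens in general. When a single pseudo branch is created at a former valency-two vertex (the paper's model case, a type (b) move), the new high-valency vertex contributes extra lattice vectors $\big(\vec{\ell}_1,\pm(2s+1),\ell_2,\pm1\big)$ beyond the two counterpart vectors that reproduce the old contribution. For a fixed power of $q$ these extra vectors are selected by a quadratic Diophantine constraint, and they produce, chamber by chamber, infinite $t$-series whose exponents grow quadratically in the solution index (partial-theta type, as in the explicit $C_{\xi,n}(t)$ computed in Section~\ref{section_recovering_q_series}). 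Such series are not of the form $(\rho(t)-1)$ times the original coefficient for any fixed rational (or even $n$-independent) $\rho$: the extra contribution depends on $n$ and on the chamber through the solution set of a QDE that changes with $n$, so there is no proportionality across powers of $q$. The exact factor $\tfrac{1+t^2}{2t}$ in Example~\ref{example_nonreduced_plumbing} is special to that configuration, where the pseudo vertex carries \emph{two} pseudo branches and the pairs $\vec{\ell}=(\dots,+1)$, $(\dots,-1)$ cancel identically at finite $t$; with one pseudo branch no such cancellation occurs at finite $t$, which is precisely why the paper cannot (and does not) argue via a multiplicative factor.

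The paper's actual mechanism is weaker and limit-based: it pairs the extra vectors $\vec{\ell}'_{1,+}$ and $\vec{\ell}'_{1,-}$ (differing only in the sign of the entry at the new $\pm1$ leaf), writes their contribution as a difference of $t$-series $t^{2r+2s+p_r+2}$ versus $t^{2r+2s+p_r}$ supported on the nonnegative solutions of two quadratic Diophantine equations, and then shows -- using the Lagrange/Pell machinery of Appendix~\ref{appendix_QDE} -- that both equations transform to the \emph{same} generalized Pell equation with the same data $D$, $E$, $F$, so their solution families have identical asymptotic patterns; hence the two series have the same $t\to1$ limit and their difference vanishes in the limit, even though it is nonzero at finite $t$. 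This is also where the hypothesis of at most two high-valency vertices enters concretely: it keeps the constraint a two-variable QDE, to which the Pell analysis applies, rather than merely ``keeping the case analysis finite'' as you suggest. To repair your proposal you would have to abandon the factorization and instead prove directly that the extra $t$-series tend to zero, which essentially forces you back to the paper's Diophantine argument.
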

\begin{proof}
	Since the $(q,t)$-series gets changed only when a Neumann move creates a pseudo high-valency vertex, it is sufficient to show that the recovered $q$-series remains unchanged for such a Neumann move. Therefore, we are going to consider a case shown in Figure \ref{fig_nonreduced_two_hvv_proposition}, where the left plumbing (a) is a reduced one with one high-valency vertex weighted by $ m_1 $ and the right plumbing (b) has an extra pseudo high-valency vertex created by a Neumann move of type (b) in Figure \ref{fig_neumann_moves}. The cases for other types of Neumann moves creating a new pseudo high-valency vertex can be handled in a similar way.
	
	\begin{figure}[t]
		\centering
		\includegraphics[scale=0.4]{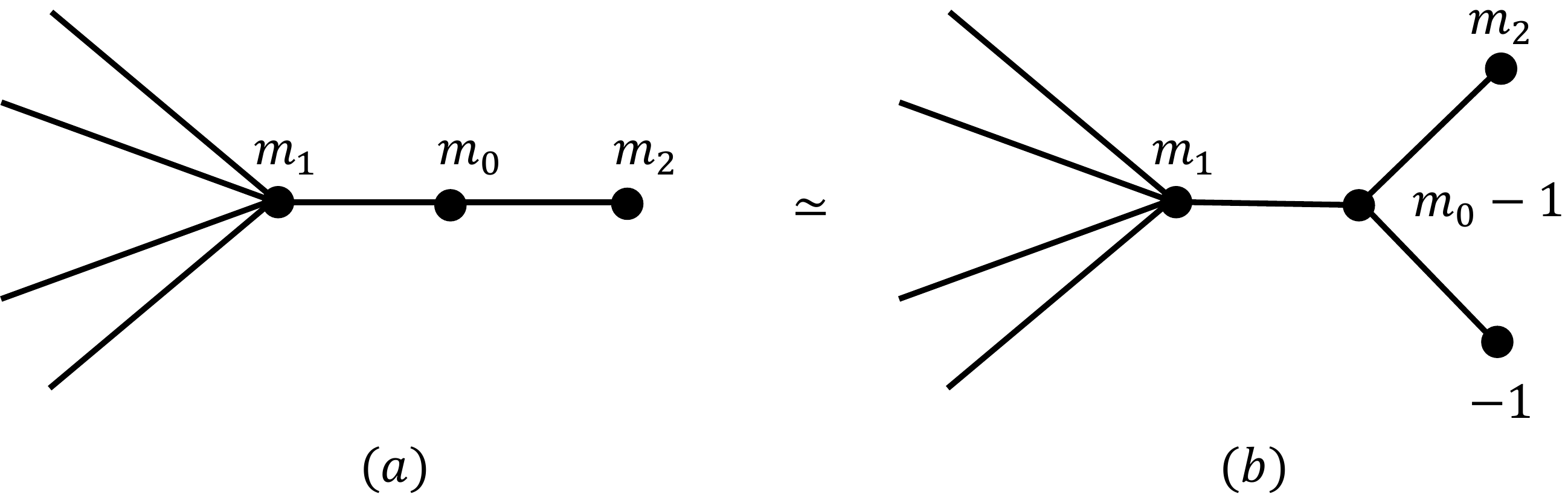}
		\caption{A reduced plumbing (a) with one high-valency vertex turns out to be a non-reduced plumbing~(b) with two high-valency vertices by a Neumann move.}
		\label{fig_nonreduced_two_hvv_proposition}
	\end{figure}
	
	Let $ \Gamma $ be the left plumbing and $ \Gamma' $ be the right one in Figure \ref{fig_nonreduced_two_hvv_proposition}. We use a prime to denote the quantities for the right plumbing. Let us write a vector $ \vec{\ell} $ by
	\[
	\vec{\ell}=\big(\vec{\ell}_1,0,\ell_2\big),
	\]
	where $ \vec{\ell}_1 $ corresponds to the left part of the graph $ \Gamma $ including the vertex $ m_1 $. As we have already seen in the proof of Theorem \ref{theorem_invariance}, we have corresponding vectors for $ \Gamma' $
	\[
	\vec{\ell}'_{\pm}=\big(\vec{\ell}_1, \pm1, \ell_2, \mp 1\big),
	\]
	which give the exactly same contribution as $ \vec{\ell} $. However, due to the existence of pseudo high-valency vertex in $ \Gamma' $, we have the following extra vectors $ \vec{\ell}' $ corresponding to $ \vec{\ell} $
	\[
	\vec{\ell}'_{1,\pm}=\big(\vec{\ell}_1, 2s+1,\ell_2,\pm1\big), \qquad \vec{\ell}'_{2,\pm}=\big(\vec{\ell}_1, -2s-1,\ell_2,\pm1\big)
	\]
	for $ s\in \mathbb{N} $. It is immediate that those infinitely many extra vectors generate extra $ t $-series. Therefore, we need to show that the limit $ t \rightarrow 1 $ of those extra $ t $-series is equal to zero.
	
	Let us consider the extra $ t $-series coming from the vectors $ \vec{\ell}'_1 $. The case for the vectors $ \vec{\ell}'_2 $ is similar. For some $ n\in \mathbb{Z} $ and $ \Delta_a\in \mathbb{Q} $, the $ t $-series coefficient of $ q^{\Delta_a+n} $ from those vectors is determined by
	\[
	\sum_{r=0}^{\infty}\sum_{s=1}^{\infty}\left\{\left.t^{2r+2s+p_r+2}\right|_{Q(\vec{\ell}'_{1,+})}-\left.t^{2r+2s+p_r}\right|_{Q(\vec{\ell}'_{1,-})}\right\},
	\]
	where $ p_r=\deg(v_1)-2 $ and $ Q\big(\vec{\ell}'\big) $ denotes the following quadratic equation
	\[
	\frac{3\sigma'-\sum_I m'_I}{4}-\frac{\big(\vec{\ell}',{M'}^{-1}\vec{\ell}'\big)}{4}=\Delta_a+n.
	\]
	This means that those extra $ t $-series would be zero after taking the limit $ t\rightarrow 1 $ if the non-negative solutions of two quadratic equations $ Q\big(\vec{\ell}'_{1,\pm}\big) $ are expressed by the families of solutions with the same patterns.
	
	Indeed, this is the case. As we have mentioned in Appendix~\ref{appendix_QDE}, the first step to solve general quadratic Diophantine equation \eqref{general_diophantine_equation} is to transform it into the form of generalized Pell's equation~\eqref{generalized_pell_equation} by using the following transformations
	\[
	D=b^2-4ac,\qquad E=bd-2ae,\qquad F=d^2-4af,\qquad N=E^2-DF.
	\]
	Therefore, the solutions of \eqref{general_diophantine_equation} are expressed by the factors $ D$, $E $ and $ F $ together with the solutions of the generalized Pell's equation. We have already seen that two quadratic equations~$ Q\big(\vec{\ell}'_{1,\pm}\big) $ are transformed into the same generalized Pell's equation. Moreover, it is an algebraic exercise that these equations have the same transforming factors~$ D$,~$E $ and~$ F $. Thus, the solutions of two quadratic equations have the same patterns, which completes the proof.
\end{proof}

Based on Proposition \ref{prop_recovered_q_series_preserved}, we naturally suggest the following conjecture:

\begin{Conjecture}\label{conjecture}
The recovered $ q $-series, obtained from the $ (q,t) $-series in the sense of $ t\rightarrow 1 $, is an invariant of $3$-manifolds realized by arbitrary $($i.e., possibly non-reduced$)$ plumbings.
\end{Conjecture}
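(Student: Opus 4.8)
The plan is to reduce the conjecture, in the same spirit as Proposition~\ref{prop_recovered_q_series_preserved}, first to a single Neumann move and then to the arithmetic of an associated pair of quadratic equations, but now allowing an arbitrary number of high-valency vertices. By Theorem~\ref{theorem_invariance} together with Proposition~\ref{proposition_relation_reduced_plumbings}, the series $\hat Z_a(q,t)$ — hence its limit $t\to1$ whenever that limit exists — is already an invariant within the class of reduced plumbings. So, given an arbitrary (possibly non-reduced) plumbing $\Gamma'$ for which $\lim_{t\to1}\hat Z_a(q,t)$ is finite, one connects $\Gamma'$ to a reduced plumbing $\Gamma$ by a finite sequence of Neumann moves; since the only moves that change $\hat Z_a(q,t)$ are those creating or annihilating a pseudo high-valency vertex, it suffices to prove that one such move leaves $\lim_{t\to1}\hat Z_a(q,t)$ unchanged. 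This is exactly Proposition~\ref{prop_recovered_q_series_preserved} with the hypothesis ``at most two high-valency vertices'' removed, so the entire problem is to push that argument through in general.

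Next I would analyse the local effect of such a move, say a type~(b) move attaching a $-1$ blow-up that produces a pseudo high-valency vertex $w'$ with $p_{w'}$ pseudo branches (the types~(a) and~(c) cases are handled identically). By \eqref{eqn_defn_Discriminant_(qt)_series} the discriminant of $\Gamma'$ acquires the extra factor $(-1)^{p_{w'}}\sum_{s\ge0}A(s,p_{w'})\,t^{2s+p_{w'}}z_{w'}^{\pm(2s+p_{w'})}$ multiplied by the $\varphi$-twisted discriminants of the pseudo branches, while $\Theta_a^{M'}$ correspondingly runs over vectors $\vec\ell'$ whose $w'$-entry equals $\pm(2s+p_{w'})$. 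The $s=0$ terms, combined with the paired blow-up vectors, reproduce the contribution of $\Gamma$ exactly as in the proof of Theorem~\ref{theorem_invariance}; the genuinely new feature is the infinite tail $s\ge1$. Hence for every chamber $\xi$ and every power $q^{\Delta_a+n}$ one extracts an \emph{extra} $t$-series of the schematic shape $\sum_{r\ge0,\,s\ge1}\bigl\{\,t^{2r+2s+p_{w'}+2}\big|_{Q(\vec\ell'_{+})=0}-t^{2r+2s+p_{w'}}\big|_{Q(\vec\ell'_{-})=0}\bigr\}$, and the claim becomes: after summing over $\xi\in\mathcal S$, the limit $t\to1^-$ of every such extra $t$-series vanishes.

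The arithmetic core is then as in Proposition~\ref{prop_recovered_q_series_preserved}: the equations $Q(\vec\ell'_{+})=0$ and $Q(\vec\ell'_{-})=0$ differ only by the sign of the new last coordinate and a shift $\pm1$ in the pseudo-branch coordinate, so they are ``parallel''; one verifies that the Appendix~\ref{appendix_QDE} reduction assigns them the same transforming data $D,E,F,N$, so their non-negative solution sets split into families with matching parametrizations. Substituting $t={\rm e}^{-\epsilon}$ and applying the Euler--Maclaurin estimate to the polynomially growing families and the Ramanujan doubly-exponential estimate to the exponentially growing ones, the leading pieces — the $\sqrt{\pi/(\epsilon a)}$ term, the $\log\epsilon$ term, and the oscillatory $\cos$ series — agree for the $+$ and the $-$ family, so their difference is $O(\sqrt\epsilon)$ or $O(\epsilon\log\epsilon)$ and tends to $0$ (equivalently, any surviving divergent sum is killed by the vanishing prefactor $t^2-1$, and any residual oscillation must cancel after the chamber sum). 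Summing over families, over chambers, and iterating over the Neumann moves connecting $\Gamma'$ to $\Gamma$ would complete the argument.

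The step I expect to be the real obstacle is the structural input ``the non-negative solutions of $Q(\vec\ell')=0$ decompose into explicit families with matching parametrizations.'' This is supplied by Appendix~\ref{appendix_QDE} only when there are at most two high-valency vertices, for then $Q$ is a binary quadratic form whose integer points are governed by a single generalized Pell equation; with three or more high-valency vertices $Q$ is an indefinite quadratic form in three or more variables, whose integer points form a much more intricate set, with no clean ``family'' description, and one does not even know a priori that $\lim_{t\to1}C_{\xi,n}(t)$ exists. A full proof would therefore require controlling the $t\to1^-$ asymptotics of indefinite theta-type series in several variables and establishing a universal cancellation of their singular parts across the chambers $\mathcal S$ — this is the genuine difficulty, and the reason the statement is offered as a conjecture. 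For plumbings with at most two high-valency vertices the plan above goes through (it is essentially Proposition~\ref{prop_recovered_q_series_preserved} together with the routine extension to all move types), so the open content of the conjecture is precisely the case of three or more high-valency vertices.
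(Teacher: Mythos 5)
The statement you are asked about is a \emph{conjecture}: the paper offers no proof of it, only the supporting evidence of Proposition~\ref{prop_recovered_q_series_preserved} (which covers non-reduced plumbings with at most two high-valency vertices) and the numerics of Example~\ref{example_nonreduced_plumbing}. Your proposal, read as a proof, therefore has a genuine gap --- but it is exactly the gap you yourself identify in your final paragraph, and your diagnosis agrees with the paper's. The reduction you perform (invariance within the reduced class via Theorem~\ref{theorem_invariance} and Proposition~\ref{proposition_relation_reduced_plumbings}; localization to a single pseudo-high-valency-creating move; isolation of the extra $t$-series from the tail $s\ge 1$; matching of the transforming data $D,E,F,N$ for the paired equations $Q(\vec\ell'_{\pm})=0$) is precisely the content and method of the paper's Proposition~\ref{prop_recovered_q_series_preserved}, so for at most two high-valency vertices your plan is sound and reproduces what the paper actually proves.

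The unfixable step is the one you flag: with three or more high-valency vertices the exponent of $t$ is controlled by an indefinite quadratic form in three or more integer variables, Appendix~\ref{appendix_QDE} (which rests entirely on the binary Pell-equation structure) gives no family decomposition of the solution set, and neither the Euler--Maclaurin nor the Ramanujan doubly-exponential estimates apply; moreover the paper's own third example shows that even existence of $\lim_{t\to 1}C_{\xi,n}(t)$ can fail, with oscillatory singular terms that do \emph{not} cancel over the chamber sum. One small inaccuracy worth correcting: there is no prefactor $t^2-1$ anywhere in the construction that could ``kill a surviving divergent sum,'' so that parenthetical escape route is not available --- the only mechanism for cancellation in the paper is the sum over chambers $\xi\in\mathcal S$, and establishing that cancellation in the multivariable case is the open problem. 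In short: your proposal does not prove the conjecture, the paper does not either, and your identification of where and why the argument stops is accurate.
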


\begin{Remark}As we have seen in Sections~\ref{section_(qt)_series} and \ref{section_recovering_q_series}, $ \hat{Z}_a(q, t) $ is defined for reduced plumbings and it has a nice property that it is not only an invariant of 3-manifolds realized by reduced plumbings but also a refined version of $ \hat{Z}_a(q) $, i.e., $ \lim_{t\rightarrow 1} \hat{Z}_a(q,t) =\hat{Z}_a(q)$ for weakly negative definite plumbings. Furthermore, it provides a key to define $ \hat{Z}_a(q) $ even for strongly indefinite plumbings if its limit as $ t\rightarrow 1 $ exists.
	
	For non-reduced plumbings, it is dependent on the presentation of plumbings and it undergoes changed by the Neumann moves. However, as we demonstrates in Example~\ref{example_nonreduced_plumbing} and Proposition~\ref{prop_recovered_q_series_preserved}, the limit $ \lim_{t\rightarrow 1} \hat{Z}_a(q,t) $, if exists, is still an invariant for non-reduced plumbings with at most two high-valency vertices. Thus, if Conjecture~\ref{conjecture} holds, then we could construct an invariant of all tree plumbed 3-manifolds by obtaining the recovered $ q $-series under the assumption of the existence of the limit~$ t\rightarrow 1 $.
\end{Remark}

\section{Connected sum of plumbed 3-manifolds}
\label{section_connected_sum}

Let $ \Gamma_1 $ and $ \Gamma_2 $ be plumbing graphs and $ Y_1$, $Y_2 $ be 3-manifolds realized by $ \Gamma_1$, $\Gamma_2 $, respectively. It is well-known that the connected sum $ Y_1\sharp Y_2 $ can be realized by the disjoint union $ \Gamma_1 \sqcup \Gamma_2 $. The following proposition \cite{neumann} says that the disjoint union of finite number of plumbing graphs is equivalent a certain single plumbing graph.

\begin{Proposition}
	\label{prop_disjoint_union_neumann}
	If a plumbing graph $ \Gamma $ has the form of Figure~{\rm \ref{fig_disjoin_union}}, then the $3$-manifold $ Y(\Gamma) $ is homeomorphic to the connected sum of $ Y(\Gamma_1), Y(\Gamma_2), \dots, Y(\Gamma_n) $.
\begin{figure}[t]		\centering
\includegraphics[scale=0.4]{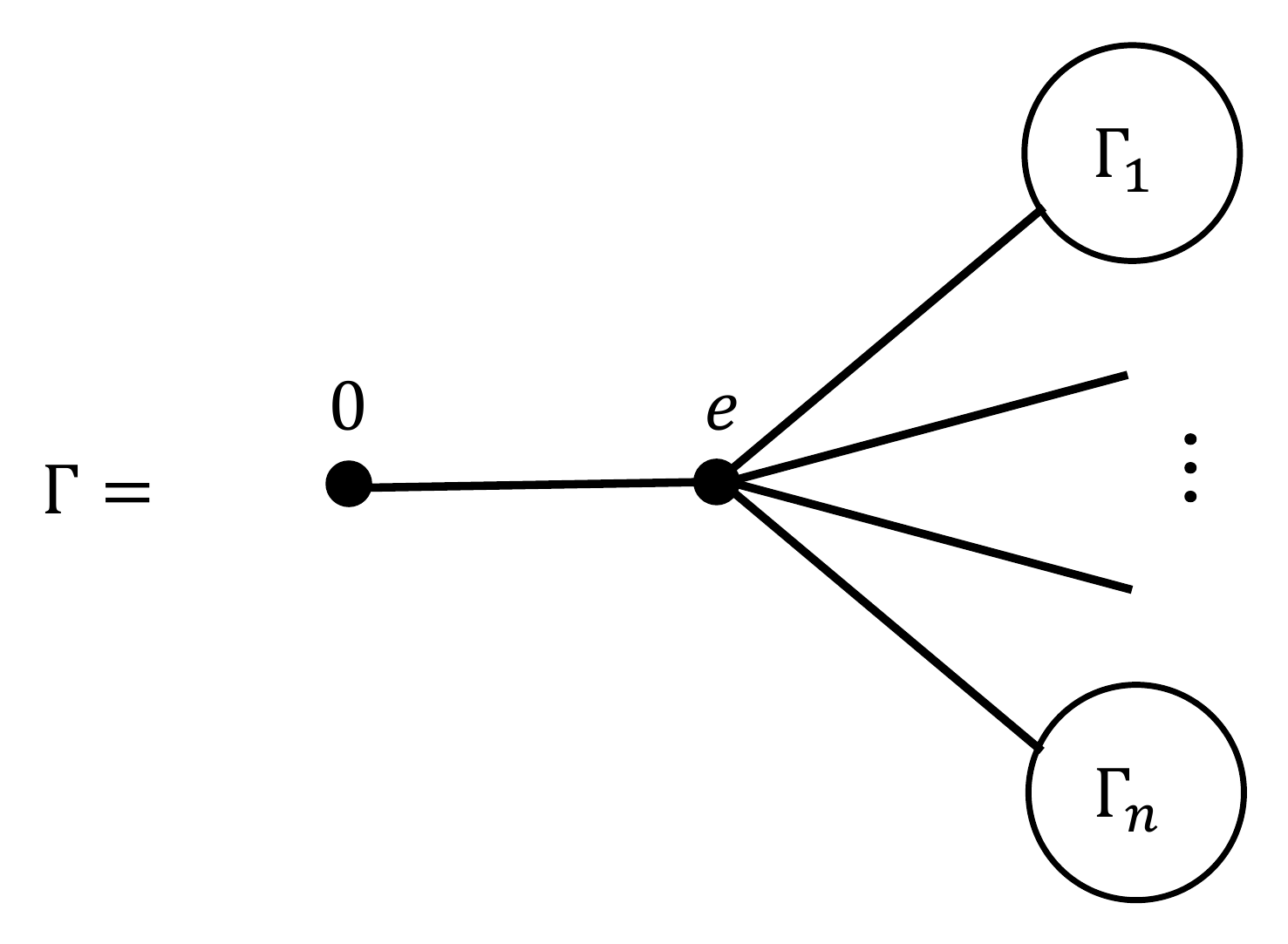}
\caption{A plumbing graph $ \Gamma $ that is equivalent to the disjoint union of $ \Gamma_1,\Gamma_2,\dots, \Gamma_n $. Here the weight~$ e $ can be any integer.}		\label{fig_disjoin_union}
	\end{figure}
\end{Proposition}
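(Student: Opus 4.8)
The plan is to prove this by Kirby calculus on the surgery link $L(\Gamma)$, reducing $\Gamma$ to the disjoint union $\Gamma_1\sqcup\cdots\sqcup\Gamma_n$ and then invoking the standard fact recalled just above the proposition: surgery on a split link produces the connected sum of the surgeries on its pieces. So the whole content is to check that the connected graph $\Gamma$ of Figure~\ref{fig_disjoin_union} represents the same $3$-manifold as $\Gamma_1\sqcup\cdots\sqcup\Gamma_n$.

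First I would read off the relevant part of $L(\Gamma)$: the distinguished vertex $v_0$ of weight $e$ gives an $e$-framed unknot $K$ that is Hopf-linked with exactly one component of each sublink $L(\Gamma_i)$ and, together with the weight-$0$ valency-one vertex shown in Figure~\ref{fig_disjoin_union}, also carries a $0$-framed meridian $K_0$ of $K$ that is linked only to $K$. The sublinks $L(\Gamma_1),\dots,L(\Gamma_n)$ are pairwise unlinked, interacting only through $K$. The key move is then a slam-dunk: since $K_0$ is a $0$-framed meridian of the integer-framed $K$ and meets nothing else, sliding $K_0$ into the surgery solid torus of $K$ absorbs $K_0$ and changes the framing of $K$ from $e$ to $e-1/0=\infty$; this is exactly the point at which the hypothesis that $e$ is an arbitrary integer is used, since the outcome is the $\infty$-framing no matter what $e$ was. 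An $\infty$-framed component may be deleted without changing the $3$-manifold, and deleting $K$ unclasps the sublinks from one another, so that $L(\Gamma)$ becomes the split link $L(\Gamma_1)\sqcup\cdots\sqcup L(\Gamma_n)$. Hence $Y(\Gamma)\cong Y(\Gamma_1\sqcup\cdots\sqcup\Gamma_n)\cong Y(\Gamma_1)\sharp\cdots\sharp Y(\Gamma_n)$.

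I do not expect a genuine obstacle here; the only thing that needs care is the bookkeeping of the slam-dunk — in particular, verifying that after $K_0$ is absorbed and the $\infty$-framed $K$ is erased, the components of the various $L(\Gamma_i)$ indeed become pairwise unlinked and no residual framings are introduced. An equivalent route, which avoids link-diagram bookkeeping altogether, is to translate these two Kirby moves into an explicit finite sequence of the Neumann moves of Figure~\ref{fig_neumann_moves} relating $\Gamma$ to $\Gamma_1\sqcup\cdots\sqcup\Gamma_n$, and to invoke Neumann's calculus \cite{neumann} (valid for arbitrary, not necessarily connected, plumbing graphs); this is the form in which the proposition is classically stated.
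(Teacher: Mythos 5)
Your argument is correct, but note that the paper itself does not prove this proposition at all: it is imported verbatim from Neumann's plumbing calculus (the citation \cite{neumann} in the sentence introducing it), which is exactly the ``equivalent route'' you mention in your closing paragraph. Your blind proof is the standard direct Kirby-calculus derivation: the $0$-weighted leaf gives a $0$-framed meridian $K_0$ of the $e$-framed unknot $K$ coming from the central vertex; since $K$ has integer framing, the slam-dunk absorbs $K_0$ and turns the coefficient of $K$ into $e-1/0=\infty$ (independently of $e$, which is precisely why the caption allows $e$ arbitrary); deleting the $\infty$-framed component splits the diagram into $L(\Gamma_1)\sqcup\cdots\sqcup L(\Gamma_n)$ with unchanged framings, and surgery on a split link yields the connected sum. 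The bookkeeping you flag is harmless: the sublinks $L(\Gamma_i)$ interact only through $K$ because $\Gamma$ is a tree and distinct $\Gamma_i$'s attach to $K$ at non-adjacent vertices, and deleting an $\infty$-framed component involves no handle slides, so no residual linking or framing corrections appear. Compared with the paper, your route is self-contained and makes the role of the weight-$0$ vertex and of the arbitrary integer $e$ transparent, at the cost of working at the level of surgery diagrams; the paper's choice simply to invoke Neumann's calculus is shorter and is the form in which the statement is classically available.
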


Let $ \hat{Z}_{a_j}(\Gamma_j; q,t) $ be the $ (q,t) $-series for $ \Gamma_j $ with $ \text{Spin}^c $ structure $ a_j $, where $ j=1,2. $ Then, according to Proposition~\ref{prop_disjoint_union_neumann}, the connected sum of $ Y_1=Y(\Gamma_1) $ and $ Y_2=Y(\Gamma_2) $ can be realized by a plumbing $ \Gamma $ in the form of Figure~\ref{fig_disjoin_union}. Just for simplicity, we assume that $ \Gamma_j $ is connected to the vertex $ e $ through a valency one vertex of a non-pseudo branch in~$ \Gamma_j $. This assumption is just to avoid creating a new high-valency vertex or a new pseudo bridge.

We now present a formula for the $ (q,t) $-series $ \hat{Z}_a(\Gamma; q,t) $.

\begin{Proposition}
	The $ (q,t) $-series of $ \Gamma $ can be expressed in terms of the product of $ \hat{Z}_{a_j}(\Gamma_j; q,t) $, $ j=1,2 $ as following:
	\begin{equation}\label{eqn_disjoint_union_formula}
		\hat{Z}_a(\Gamma; q,t)=\frac{1}{2}\sum_{r=0}^{\infty} t^{2r+1}\big(q^{-\frac{2r+1}{2}}-q^{\frac{2r+1}{2}}\big) \cdot \hat{Z}_{a_1}(\Gamma_1; q,t)\hat{Z}_{a_2}(\Gamma_2; q,t).
	\end{equation}
	It is worth to notice here that the series
	\[
	\frac{1}{2}\sum_{r=0}^{\infty} \big(q^{-\frac{2r+1}{2}}-q^{\frac{2r+1}{2}}\big)
	\]
	is the symmetric expansion of a rational function
	\[
	\frac{1}{q^{1/2}-q^{-1/2}},
	\]
	that is just the inverse of the $ q $-series of $3$-sphere.
\end{Proposition}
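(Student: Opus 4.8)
The plan is to substitute both sides into the defining formula~\eqref{eqn_defn_(qt)_series} and to factor the constant‑term expression for $\Gamma$ into the ones for $\Gamma_1$ and $\Gamma_2$ times the stated gluing series. I treat $n=2$ (the general case is analogous). By Proposition~\ref{prop_disjoint_union_neumann}, $\Gamma$ is $\Gamma_1$ and $\Gamma_2$ together with one extra vertex $v_0$ of weight $e$ joined by an edge to the distinguished valency‑one vertex $u_j$ of a non‑pseudo branch $\Gamma_{w_j u_j}$ of $\Gamma_j$, $j=1,2$. The hypothesis guarantees that no new high‑valency vertex and no new pseudo bridge appears, so $V_h(\Gamma)=V_h(\Gamma_1)\sqcup V_h(\Gamma_2)$, the arm through $v_0$ from $w_1$ to $w_2$ is a non‑pseudo bridge, $\Gamma$ is reduced, and $\mathcal{S}(\Gamma)=\mathcal{S}(\Gamma_1)\times\mathcal{S}(\Gamma_2)$, whence $\frac{1}{|\mathcal{S}(\Gamma)|}\sum_{\xi\in\mathcal{S}(\Gamma)}=\frac{1}{|\mathcal{S}(\Gamma_1)|}\frac{1}{|\mathcal{S}(\Gamma_2)|}\sum_{\xi_1}\sum_{\xi_2}$ factors. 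By Theorem~\ref{theorem_invariance} the left‑hand side depends only on $Y(\Gamma)=Y_1\sharp Y_2$ and $a$; in particular it does not depend on $e$ (compatibly with the right‑hand side not involving $e$), so I would later fix $e$ conveniently. Finally, under \eqref{eqn_spinc_identification_Y} the structure $a$ corresponds to a pair $(a_1,a_2)$; I would take for $\vec a$ the concatenation of representatives of $\vec a_1,\vec a_2$, corrected at $u_1,u_2,v_0$ to account for the change of the degree vector $\vec\delta$ at those vertices.

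For the prefactor, write $M=M(\Gamma)$ in block form with diagonal blocks $M_1$, $M_2$, the scalar $e$, and the only off‑block entries $M_{u_1v_0}=M_{v_0u_2}=1$. A congruence/Schur‑complement computation gives $\sum_v m_v(\Gamma)=\sum m(\Gamma_1)+\sum m(\Gamma_2)+e$, $\sigma(M)=\sigma(M_1)+\sigma(M_2)+\sigma_0$ and $\pi(M)=\pi(M_1)+\pi(M_2)+\pi_0$, where $(\sigma_0,\pi_0)$ is fixed by the sign of the Schur complement $D=e-(M_1^{-1})_{u_1u_1}-(M_2^{-1})_{u_2u_2}$ of the $v_0$‑direction (so also $\det M=\det M_1\,\det M_2\,D$, and one chooses $e$ so that $|D|=1$, which is what makes $Y(\Gamma)$ the connected sum). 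Hence $(-1)^{\pi(\Gamma)}q^{(3\sigma(\Gamma)-\sum m(\Gamma))/4}$ equals the product of the prefactors of $\hat{Z}_{a_1}(\Gamma_1;q,t)$ and $\hat{Z}_{a_2}(\Gamma_2;q,t)$ times an explicit residual factor $(-1)^{\pi_0}q^{(3\sigma_0-e)/4}$, to be absorbed into the gluing series.

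The main step is the factorization of $\text{CT}_{\vec z}$. The key observation is that $v_0$ is valency two and, since $u_j$ leaves $I_{w_j}$, the factor $\mathcal{D}_{w_j}(\vec z,t)|_{\xi_{w_j}}$ for $\Gamma$ is exactly the one for $\Gamma_j$ with the (trivial, $\varphi(u_j)=0$) factor $z_{u_j}-z_{u_j}^{-1}$ deleted; therefore in \eqref{eqn_defn_(qt)_series} for $\Gamma$ the variables $z_{u_1},z_{v_0},z_{u_2}$ occur only inside $\Theta_a^{M}$, while all other $\mathcal{D}_w$'s are literally the discriminant functions of $\Gamma_1$ and $\Gamma_2$. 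I would take $\text{CT}$ in $z_{u_1},z_{v_0},z_{u_2}$ first, restricting the lattice sum in $\Theta_a^{M}$ to $\ell_{u_1}=\ell_{v_0}=\ell_{u_2}=0$. On this restricted lattice the block inverse of $M$ gives, purely as linear algebra,
\[
(\vec\ell,M^{-1}\vec\ell)=(\vec\ell_1,M_1^{-1}\vec\ell_1)+(\vec\ell_2,M_2^{-1}\vec\ell_2)+D^{-1}\big[(M_1^{-1}\vec\ell_1)_{u_1}+(M_2^{-1}\vec\ell_2)_{u_2}\big]^2,
\]
where $\vec\ell_j:=\vec\ell|_{V(\Gamma_j)}$, so that the two sides couple only through the single scalar $(M_1^{-1}\vec\ell_1)_{u_1}+(M_2^{-1}\vec\ell_2)_{u_2}$. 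I would then re‑parametrize the restricted lattice‑coset of $\Gamma$: the three vanishing conditions pin down the $v_0$‑entry of any $\vec k$ with $\vec\ell=2M\vec k+\vec a$, and the rest splits as a lattice vector for $\Gamma_1$ (with $\ell_{u_1}$ running over $\pm1$, precisely the values selected inside $\hat{Z}_{a_1}(\Gamma_1;q,t)$ by the deleted factor $z_{u_1}-z_{u_1}^{-1}$), a lattice vector for $\Gamma_2$ (analogously), and one leftover integer parameter $r\geq0$. Carrying out the $\Gamma_1$‑ and $\Gamma_2$‑constant terms against the now‑decoupled discriminant products rebuilds $\hat{Z}_{a_1}(\Gamma_1;q,t)\,\hat{Z}_{a_2}(\Gamma_2;q,t)$, while the sum over the leftover parameter, together with the residual prefactor above and the $t$‑exponent bookkeeping at $w_1,w_2$, collapses to $\frac12\sum_{r\geq0}t^{2r+1}\big(q^{-(2r+1)/2}-q^{(2r+1)/2}\big)$; recognizing this as the symmetric expansion of $1/(q^{1/2}-q^{-1/2})$ then finishes the proof of \eqref{eqn_disjoint_union_formula}.

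The hard part is the bookkeeping in this last step: verifying that the restricted coset $(2M\mathbb{Z}^s+\vec a)\cap\{\ell_{u_1}=\ell_{v_0}=\ell_{u_2}=0\}$ really does split as the advertised product of the $\Gamma_j$‑cosets with a single free parameter — in particular that the $\text{Spin}^c$ correspondence $a\leftrightarrow(a_1,a_2)$ is compatible after the $\vec\delta$‑correction at $u_1,u_2,v_0$, and that no residues are dropped by the order in which the constant terms are taken — and then tracking all signs and all powers of $t$ and of $q^{1/2}$ so that exactly the stated kernel is produced. Pinning down $(\sigma_0,\pi_0)$ for the chosen $e$ is the other point requiring care; the structural claims (chambers, reducedness, the block form of $M$, the discriminant identity at $w_j$) are routine.
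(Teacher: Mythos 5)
Your reading of the gluing graph is not the one in Proposition~\ref{prop_disjoint_union_neumann}, and this is fatal to the argument. In Figure~\ref{fig_disjoin_union} (and in the linking matrix \eqref{eqn_linkingmatrix_disjoint_union} used in the paper's proof) the new vertex of weight $e$ is \emph{not} a valency-two vertex sitting on a path from $u_1$ to $u_2$: it also carries a valency-one neighbour of weight $0$, so it is a new \emph{high-valency} vertex of degree $3$, for any integer $e$. Your graph $\Gamma_1 - v_0(e) - \Gamma_2$ with a single degree-two vertex does not realize $Y_1\sharp Y_2$; for example, gluing two $(-2)$-vertices through $e=0$ (your condition $|D|=1$ is satisfied) gives the single vertex $-4$, i.e., $L(4,1)$, not $\mathbb{RP}^3\sharp\mathbb{RP}^3$, so the claim that choosing $e$ with $|D|=1$ ``is what makes $Y(\Gamma)$ the connected sum'' is false. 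Consequently all the structural statements downstream are off: $V_h(\Gamma)=V_h(\Gamma_1)\sqcup V_h(\Gamma_2)\sqcup\{e\}$ and $|\mathcal{S}(\Gamma)|=2|\mathcal{S}(\Gamma_1)||\mathcal{S}(\Gamma_2)|$ in the correct graph, not what you wrote.

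This misreading also removes exactly the mechanism that produces the gluing kernel and the factorization. In the paper's proof, the $0$-leaf makes the inverse matrix \eqref{eqn_inverse_linkingmatrix_disjoint_union} block-split with \emph{no} coupling between the $\Gamma_1$- and $\Gamma_2$-blocks, giving the exact identity $\big(\vec\ell,M^{-1}\vec\ell\,\big)=\big(\vec\ell_1,M_1^{-1}\vec\ell_1\big)+\big(\vec\ell_2,M_2^{-1}\vec\ell_2\big)-e-2\ell_e$ (and $\det M=-\det M_1\det M_2$, $\pi=\pi_1+\pi_2+1$, independent of $e$); the factor $\frac12\sum_{r\ge0}t^{2r+1}\big(q^{-\frac{2r+1}{2}}-q^{\frac{2r+1}{2}}\big)$ then comes precisely from the discriminant series of the new high-valency $e$-vertex in its two chambers, paired via $\mathrm{CT}_{z_e,z_0}$ with the $0$-leaf factor and the $\sum_{\ell_e}q^{\frac{e+2\ell_e}{4}}z_e^{\ell_e}$ part of the theta function, the $q^{\pm e/4}$ cancelling. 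In your setup there is no new high-valency vertex to supply these powers of $t$, and your own Schur-complement formula shows the residual coupling term $D^{-1}\big[(M_1^{-1}\vec\ell_1)_{u_1}+(M_2^{-1}\vec\ell_2)_{u_2}\big]^2$, which mixes the two factors in the $q$-exponents and prevents the product structure; the assertion that everything ``collapses to'' the stated kernel is exactly the point that fails and is left unproved. To repair the argument you must use the graph with the $0$-weighted leaf and redo the block computation as in \eqref{eqn_linkingmatrix_disjoint_union}--\eqref{eqn_inverse_linkingmatrix_disjoint_union}, after which the decoupling and the kernel appear for arbitrary $e$ without any choice of framing.
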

\begin{proof}
	Suppose that the linking matrices of $ \Gamma_1 $ and $ \Gamma_2 $ are given by
	\[
	M_1=\begin{pmatrix}
		a_{11}&\cdots &a_{1n}\\
		\vdots&\ddots&\vdots\\
		a_{1n}&\cdots&a_{nn}
	\end{pmatrix},\qquad M_2=\begin{pmatrix}
		b_{11}&\cdots &b_{1m}\\
		\vdots&\ddots&\vdots\\
		b_{1m}&\cdots&b_{mm}
	\end{pmatrix},
	\]
	then the linking matrix of $ \Gamma $ is expressed by
	\begin{equation}\label{eqn_linkingmatrix_disjoint_union}
		M=\begin{pmatrix}
			a_{11}&\cdots &a_{1n}&0&\cdots&\cdots&\cdots&0\\
			\vdots&\ddots&\vdots&\vdots&\ddots&\ddots&\ddots&\vdots\\
			a_{1n}&\cdots&a_{nn}&0&1&0&\cdots&0\\
			0&\cdots&0&0&1&0&\cdots&0\\
			0&\cdots&1&1&e&1&0&\cdots\\
			0&\cdots&0&0&1&b_{11}&\cdots&b_{1m}\\
			\vdots&\ddots&\ddots&\ddots&\vdots&\vdots&\ddots&\vdots\\
			0&\cdots&\cdots&\cdots&0&b_{1m}&\cdots&b_{mm}
		\end{pmatrix}.
	\end{equation}
	Also, \eqref{eqn_linkingmatrix_disjoint_union} implies
	\begin{gather}
			\det M =-\det M_1\cdot \det M_2,\nonumber\\
			\sigma(M) =\sigma(M_1)+\sigma(M_2),\nonumber\\
			\pi(M) =\pi(M_1)+\pi(M_2)+1.\label{eqn_properties_linkingmatrix_disjoint_union}
		\end{gather}
	Furthermore, the inverse matrix $ M^{-1} $ reads as following:
	\begin{equation}\label{eqn_inverse_linkingmatrix_disjoint_union}
		M^{-1}=\begin{pmatrix}
			A_{11}&\cdots&A_{1n}&-A_{1n}&0&&&\\
			\vdots&\ddots&\vdots&\vdots&\vdots&&&\\
			A_{1n}&\cdots&A_{nn}&-A_{nn}&0&&&\\
			-A_{1n}&\cdots&-A_{nn}&A_{nn}+B_{11}-e&1&-B_{11}&\cdots&-B_{1m}\\
			0&\cdots&0&1&0&0&\cdots&0\\
			&&&-B_{11}&0&B_{11}&\cdots&B_{1m}\\
			&&&\vdots&\vdots&\vdots&\ddots&\vdots\\
			&&&-B_{1m}&0&B_{1m}&\cdots&B_{mm}
		\end{pmatrix},
	\end{equation}
	where
	\[
	M^{-1}_1=\begin{pmatrix}
		A_{11}&\cdots &A_{1n}\\
		\vdots&\ddots&\vdots\\
		A_{1n}&\cdots&A_{nn}
	\end{pmatrix},\qquad M^{-1}_2=\begin{pmatrix}
		B_{11}&\cdots &B_{1m}\\
		\vdots&\ddots&\vdots\\
		B_{1m}&\cdots&B_{mm}
	\end{pmatrix}
	\]
	are the inverse matrices of $ M_1 $ and $ M_2 $, respectively. From \eqref{eqn_properties_linkingmatrix_disjoint_union}, we have following relations:
	\begin{gather*}
			(-1)^{\pi(M)}=-(-1)^{\pi(M_1)}\cdot(-1)^{\pi(M_2)},\\
			q^{\frac{3\sigma-\sum_I m_I}{4}}=q^{\frac{3\sigma_1-\sum_{I_1} m_{I_1}}{4}}\cdot q^{\frac{3\sigma_2-\sum_{I_2} m_{I_2}}{4}}\cdot q^{-\frac{e}{4}}.
		\end{gather*}
	
	Now let us write vectors $ \vec{\ell} $ as
	\[
	\vec{\ell}=\big(\ell_1,\dots,\ell_n\pm1,\pm1,\ell_e,\ell_1'\pm1,\dots,\ell_m'\big),
	\]
	with the entry $ \ell_e $ being for the vertex $ e $ and $ \pm1 $ being for the vertex weighted by $ 0 $. Then, it follows from the explicit expression~\eqref{eqn_inverse_linkingmatrix_disjoint_union} for $ M^{-1} $ that we have
	\begin{equation*}
		\big(\vec{\ell},M^{-1}\vec{\ell}\,\big)=\big(\vec{\ell_1},M^{-1}_1\vec{\ell_1}\big)+\big(\vec{\ell_2'},M^{-1}_2\vec{\ell_2'}\big)-e-2\ell_e,
	\end{equation*}
	which implies
	\begin{equation*}
		\Theta_a^{-M}(\vec{z})=\Theta_{a_1}^{-M_1}(\vec{z_1})\left(z_{n}-\frac{1}{z_n}\right)\Theta_{a_2}^{-M_2}(\vec{z_2})\left(z_{1}'-\frac{1}{z_1'}\right)\sum_{\ell_e}q^{\frac{e+2\ell_e}{4}}z_{e}^{\ell_e}.
	\end{equation*}
	
	Putting all together into the defining formula \eqref{eqn_defn_(qt)_series}, it is straightforward to get \eqref{eqn_disjoint_union_formula}.
\end{proof}

\appendix
\section{Quadratic Diophantine equation in two variables}
\label{appendix_QDE}
We are going to review here how to solve the general quadratic Diophantine equation
\begin{equation}\label{general_diophantine_equation}
	ax^2+bxy+cy^2+dx+ey+f=0,
\end{equation}
where the coefficients are all integers, i.e., $ a,b,c,d,e,f\in \mathbb{Z} $. We assume that not all of $ a$, $b$, $c $ are zero. The classical method of solving \eqref{general_diophantine_equation} was firstly given by Lagrange over 300 years ago.

Let $ D=b^2-4ac$, $E=bd-2ae $ and $ F=d^2-4af $. We will examine \eqref{general_diophantine_equation} in several cases, mainly depending on the values of $ D $.

At first, suppose $ D=0 $, then without loss of generality, we can assume $ a\neq 0 $. Equation~\eqref{general_diophantine_equation} can be written as
\begin{equation*}
	Y^2=2Ey+F,
\end{equation*}
where $ Y=2ax+by+d $. In the case of $ E=0 $ and $ F=0 $, then there could be either a line of integer solutions for \eqref{general_diophantine_equation} if $ \text{gcd}(2a,b) $ divides $ d $, or no solution otherwise. It is clear that there is no integer solution of \eqref{general_diophantine_equation} if $ E=0 $ and $ F<0 $ or if $ E=0 $ and $ F $ is positive but not a perfect integral square. If $ E=0 $ and $ F $ is a perfect square, then one can obtain either a line of integer solutions if $ \text{gcd}(2a,b) $ divides $ \sqrt{F}\pm d $ , or no solution otherwise. In the case of $ E\neq 0 $, \eqref{general_diophantine_equation} boils down to the congruence
\begin{equation}\label{diophantine_equation_congruence}
	Y^2 \equiv F \qquad \bmod  |2E|.
\end{equation}
Clearly, there are no integer solutions for \eqref{general_diophantine_equation} if \eqref{diophantine_equation_congruence} does not have integer solutions. In the case that \eqref{diophantine_equation_congruence} has integer solutions, it is not difficult to observe that the integer solutions $ (x,y) $ of \eqref{general_diophantine_equation} lie on a parabola if they exist.

Now suppose that $ D\neq0 $. Then by putting $ N=E^2-DF $, \eqref{general_diophantine_equation} turns out to be
\begin{equation}\label{generalized_pell_equation}
	X^2-DY^2=N,
\end{equation}
where $ X=Dy+E $. It is clear that \eqref{generalized_pell_equation} has a finite number of solutions if $ D<0 $, or if $ D $ is a positive perfect integral square, or if $ N=0 $. Therefore, the only remaining non-trivial case happens when $ N\neq0 $ and $ D>0 $ is not a perfect square. In this case, \eqref{generalized_pell_equation} is called a \textit{generalized Pell's equation} and it has infinite number of solutions if it has at least one.

\subsection{Pell's equation}
A \textit{Pell's equation} is a Diophantine equation of the form
\begin{equation}\label{pell_equation}
	T^2-DU^2=1,
\end{equation}
where $ D $ is a positive integer that is not a perfect square. The trivial solutions are $ (T,U)=(\pm1,0) $. Among all non-trivial solutions, the pair $ (t,u) $ is called the \textit{fundamental solution} if~$ t $ and~$ u $ are both positive and they are minimal. It is well-known from the work of Lagrange in 1768 that any Pell's equation has a non-trivial solution. Furthermore, once we find a solution, there is a way to generate all the solutions of \eqref{pell_equation}. A pair $ (T,U) $ of positive integers is a solution of~\eqref{pell_equation} if and only if there exists $ n\in \mathbb{N} $ such that
\begin{equation}\label{generating_solution_pell}
	T+U\sqrt{D}=\big(t+u\sqrt{D}\big)^n.
\end{equation}
Sometimes it might be much more convenient to express \eqref{generating_solution_pell} in a matrix form as follows:
\begin{align*}
		\begin{pmatrix}
			T_n\\U_n
		\end{pmatrix}&=\begin{pmatrix}
			t&Du\\
			u&t
		\end{pmatrix}^n\begin{pmatrix}
			1\\0
		\end{pmatrix}\\
		&=\begin{pmatrix}
			\sqrt{D}&-\sqrt{D}\\
			1&1
		\end{pmatrix}\begin{pmatrix}
			\big(t+u\sqrt{D}\big)^n&0\\
			0&\big(t-u\sqrt{D}\big)^n
		\end{pmatrix}\begin{pmatrix}
			\sqrt{D}&-\sqrt{D}\\
			1&1
		\end{pmatrix}^{-1}\begin{pmatrix}
			1\\0
		\end{pmatrix}\\
		&=\begin{pmatrix}
			\frac{1}{2}\big(\big(t+u\sqrt{D}\big)^n+\big(t-u\sqrt{D}\big)^n\big)\vspace{1mm}\\
			\frac{1}{2\sqrt{D}}\big(\big(t+u\sqrt{D}\big)^n-\big(t-u\sqrt{D}\big)^n\big)
		\end{pmatrix},
	\end{align*}
where in the second line we have diagonalized the matrix
\[
\begin{pmatrix}
	t&Du\\
	u&t
\end{pmatrix}=\begin{pmatrix}
	\sqrt{D}&-\sqrt{D}\\
	1&1
\end{pmatrix}\begin{pmatrix}
	t+u\sqrt{D}&0\\
	0&t-u\sqrt{D}
\end{pmatrix}\begin{pmatrix}
	\sqrt{D}&-\sqrt{D}\\
	1&1
\end{pmatrix}^{-1}
\]
by using its eigenvectors and eigenvalues. Thus, any positive solution of \eqref{pell_equation} has the form of
\begin{gather}
		T_n =\frac{1}{2}\big(\big(t+u\sqrt{D}\big)^n+\big(t-u\sqrt{D}\big)^n\big),\nonumber\\
		U_n =\frac{1}{2\sqrt{D}}\big(\big(t+u\sqrt{D}\big)^n-\big(t-u\sqrt{D}\big)^n\big),\label{solution_pell_final_form}
	\end{gather}
and clearly $ (\pm T_n, \pm U_n) $ will also be solutions.

\subsection{Generalized Pell's equation}
So far we have seen that Pell's equation \eqref{pell_equation} has an infinite number of solutions. Using those solutions one can describe a method to obtain all solutions of a generalized Pell's equation \eqref{generalized_pell_equation}. Indeed, Lagrange showed that every solution of \eqref{generalized_pell_equation} can be expressed by a power of $ t+u\sqrt{D} $ times $ X+Y\sqrt{D} $, that is,
\begin{equation}\label{pell_multiple}
	X_n+Y_n\sqrt{D}=\big(X+Y\sqrt{D}\big)\big(t+u\sqrt{D}\big)^n\qquad \text{for some } n\in \mathbb{Z},
\end{equation}
where $ (t,u) $ are the fundamental solution of the corresponding Pell's equation $ T^2-DU^2=1 $ and $ (X,Y) $ is a solution of \eqref{generalized_pell_equation} such that
\[
	|X|\leq \sqrt{|N|} \sqrt{t+u\sqrt{D}} /2 \qquad\text{and}\qquad |Y|\leq \sqrt{|N|} \sqrt{t+u\sqrt{D}} /\big(2\sqrt{D}\big).
\]
In other words, there exists a finite set $ \mathcal{S} $ of solutions for \eqref{generalized_pell_equation} such that every solution $ (X_n,Y_n) $ can be obtained by \eqref{pell_multiple} with some $ (X,Y)\in \mathcal{S} $. It follows from \eqref{solution_pell_final_form} and \eqref{pell_multiple} that
\begin{gather}
		X_n =XT_n+DYU_n=\frac{1}{2}\big[\big(X+Y\sqrt{D}\big)A^n+\big(X-Y\sqrt{D}\big)A^{-n}\big],\nonumber\\
		Y_n =XT_n+DYU_n=\frac{1}{2\sqrt{D}}\big[\big(X+Y\sqrt{D}\big)A^n-\big(X-Y\sqrt{D}\big)A^{-n}\big],\label{generalized_Pell_solution_format}
	\end{gather}
where we denoted $ A=t+u\sqrt{D}>1 $.

\subsection{Quadratic Diophantine equation in two variables: revisited}
Now we circle back to the solutions of our Diophantine equation \eqref{general_diophantine_equation}. We have already seen that in the case when $ N\neq 0 $ and $ D>0 $ is not a perfect square, the solutions of \eqref{general_diophantine_equation} is related to those of generalized Pell's equation \eqref{generalized_pell_equation}. Once we obtain all solutions of \eqref{generalized_pell_equation}, we need to identify for each $ (X,Y)\in \mathcal{S} $, those values of $ n\in \mathbb{Z} $ such that
\begin{equation*}
	\begin{cases}
		X_n\equiv E \quad \bmod  D,\\
		Y_n\equiv b(X_n-E)/D+d \quad \bmod 2a,
	\end{cases}
\end{equation*}
or equivalently,
\begin{equation*}
	\begin{cases}
		X_n\equiv E\quad \bmod  D,\\
		DY_n\equiv bX_n-bE+Dd\quad \bmod 2aD.
	\end{cases}
\end{equation*}
The efficient way \cite{sawilla} is to check a necessary condition
\begin{equation}\label{necessary_requirement_diophantine}
	2a \mid X-bY.
\end{equation}
If \eqref{necessary_requirement_diophantine} and
\begin{equation*}
	D~\left|~\frac{dD-bE-DY+bX}{2a}\right.
\end{equation*}
hold, then \eqref{generalized_Pell_solution_format} produces solutions for \eqref{general_diophantine_equation} for all even $ n $. If \eqref{necessary_requirement_diophantine} and
\begin{equation*}
	D~\left|~\frac{dD-bE-DYt+bXt}{2a}\right.
\end{equation*}
hold, then \eqref{generalized_Pell_solution_format} produces solutions for \eqref{general_diophantine_equation} for all odd $ n $. If none of these holds, then there is no solution of \eqref{general_diophantine_equation}.

\subsection*{Acknowledgements}
I would like to thank my advisor Pavel Putrov not only for introducing me this interesting topic but also for his continuous support and guidance. I would also like to thank the anonymous referees who provided insightful and detailed comments and suggestions on a earlier version of the paper.

\pdfbookmark[1]{References}{ref}
\LastPageEnding

\end{document}